\setlist[enumerate]{leftmargin=*}
\newtheorem{theorem}{Theorem}[section]
\newaliascnt{lemma}{theorem}
\newaliascnt{corollary}{theorem}
\newaliascnt{definition}{theorem}
\newaliascnt{remark}{theorem}
\newaliascnt{proposition}{theorem}
\newaliascnt{conjecture}{theorem}
\newaliascnt{example}{theorem}
\newaliascnt{question}{theorem}
\newaliascnt{claim}{theorem}
\newtheorem{lemma}[lemma]{Lemma}
\newtheorem*{lemma*}{Lemma}
\newtheorem{corollary}[corollary]{Corollary}
\newtheorem*{corollary*}{Corollary}
\newtheorem{definition}[definition]{Definition}
\newtheorem*{definition*}{Definition}
\newtheorem{remark}[remark]{Remark}
\newtheorem*{remark*}{Remark}
\newtheorem{proposition}[proposition]{Proposition}
\newtheorem*{proposition*}{Proposition}
\newtheorem*{conjecture*}{Conjecture}
\newtheorem{example}[example]{Example}
\newtheorem*{example*}{Example}
\newtheorem*{problem*}{Problem}
\newtheorem*{mproblem*}{Main Problem}
\newtheorem{question}[question]{Question}
\newtheorem*{question*}{Question}
\newtheorem*{claim*}{Claim}
\Crefname{figure}{Figure}{Figures}
\DeclareMathOperator{\inter}{int}
\DeclareMathOperator{\conv}{conv}
\DeclareMathOperator{\diag}{diag}
\def\R{\mathbb{R}}
\def\Z{\mathbb{Z}}
\def\N{\mathbb{N}}
\DeclareMathOperator{\vol}{vol}
\DeclareMathOperator{\Vol}{Vol}
\DeclareMathOperator{\lin}{lin}
\DeclareMathOperator{\GL}{GL}
\DeclarePairedDelimiter{\card}{\lvert}{\rvert}
\newcommand{\one}{\mathbf{1}}
\newcommand{\zero}{\mathbf{0}}
\newcommand{\crk}{\operatorname{cr}}
\newcommand{\crke}{\operatorname{cr_e}}
\newcommand{\crks}{\operatorname{cr_s}}
\newcommand{\crkh}{\operatorname{cr_h}}
\newcommand{\zangle}[1]{\left\langle #1 \right\rangle}
\newcommand{\Gam}[1]{\Gamma_{#1}}
\newcommand{\Gamt}[1]{\Gamma^\intercal_{#1}}
\newcommand{\qG}[1]{G_{#1}}
\newcommand{\qGt}[1]{G^\intercal_{#1}}
\numberwithin{equation}{section}
\begin{document}

\title{The cyclicity rank of empty lattice simplices}

\author{Lukas Abend}
\address{Institute of Mathematics\\
 University of Rostock\\
 Germany}
\email{lukas.abend97@gmail.com}

\author{Matthias Schymura}
\address{BTU Cottbus-Senftenberg\\
  Platz der Deutschen Einheit 1\\
  03046 Cottbus\\
  Germany}
\email{matthiasmschymura@gmail.com}

\thanks{We thank Frieder Ladisch and Gennadiy Averkov for useful discussions that helped to clarify some of our arguments. This paper grew out from the master thesis~\cite{abend2023masterthesis} of the first author. We moreover thank two anonymous reviewers for their very valuable suggestions and constructive feedback.}




\begin{abstract}
We are interested in algebraic properties of empty lattice simplices~$\Delta$, that is, $d$-dimensional lattice polytopes containing exactly $d+1$ points of the integer lattice~$\Z^d$.
The cyclicity rank of~$\Delta$ is the minimal number of cyclic subgroups that the quotient group of~$\Delta$ splits into.
It is known that up to dimension $d \leq 4$, every empty lattice $d$-simplex is cyclic, meaning that its cyclicity rank is at most~$1$.
We determine the maximal possible cyclicity rank of an empty lattice $d$-simplex for dimensions $d \leq 8$, and determine the asymptotics of this number up to a logarithmic term.
\end{abstract}

\maketitle

\section{Introduction}

A \emph{lattice polytope} is a polytope in~$\R^d$ that has all its vertices in the integer lattice~$\Z^d$.
Empty lattice simplices, i.e., lattice simplices that do not contain points of~$\Z^d$ additionally to their vertices, are the building blocks of lattice polytopes with respect to triangulations.
Their classification in small dimensions proved useful to derive properties of all lattice polytopes; see the introduction of~\cite{iglesiassantos2021thecomplete} for more information and a list of references.
Besides this fundamental motivation to understand the class of empty lattice simplices, they showed to be relevant in quite a number of different contexts:
In algebraic geometry they are in close connection to terminal quotient singularities arising in the minimal model program.
This program has been completed in dimension three by Mori~\cite{mori1988fliptheorem}, based on White's~\cite{white1964lattice} classification of empty lattice tetrahedra.
The recent advances around the flatness problem in the Geometry of Numbers partially draw from the theory of empty lattice simplices as well (cf.~\cite{codenottisantos2020hollowpolytopes}).
Moreover, the three-dimensional classification plays a prominent role for certain problems around continued fractions in number theory (see, for instance, \cite{karpenkovustinov2017geometry}).

In this paper, we focus on the algebraic properties of empty lattice simplices, in particular, on the structure of their associated quotient groups.
These finite abelian groups arise as quotients of the integer lattice~$\Z^d$ by the sublattice that is spanned by the edge directions of the lattice simplex at hand.
Barile et al.~\cite{barilebernardiborisovkantor2011onempty} showed that the quotient group of a four-dimensional empty lattice simplex is always cyclic (dimension $d = 2$ is immediate, and in dimension $d = 3$ this follows from White's classification).
They also provide an example which shows that this result does not extend to dimensions $d \geq 5$.
However, the cyclicity of the quotient groups in dimension four was instrumental for Iglesias-Vali\~{n}o \& Santos~\cite{iglesiassantos2021thecomplete} to complete the classification of empty lattice $4$-simplices; a program that was started in the late 1980s.

It is just natural to expect that any approach to classify relevant families of empty lattice $5$-simplices will benefit from further insight into the algebraic structure of their quotient groups.
With this in mind, we introduce and investigate the \emph{cyclicity rank} of an empty lattice $d$-simplex~$\Delta$, which is the minimal number of cyclic groups that the quotient group of~$\Delta$ factors into.
Our main interest is about the maximal cyclicity rank~$\crke(d)$ that can occur in a given dimension~$d$ (see \cref{sect:basics-on-quotient-groups} for precise definitions).

Our results on the function~$d \mapsto \crke(d)$ can be summarized as follows:
In \cref{thm:crk-2-exact} and \cref{cor:crk5}, we show that for $d \geq 4$, the inequalities
\[
d - \lfloor \log_2(d) \rfloor - 1 \leq \crke(d) \leq d - 3 \,,
\]
hold.
Furthermore, we obtain the exact numbers in \cref{sect:p=2,sect:p=3} for small dimensions:
\begin{table}[H]
\begin{tabular}{c|c|c|c|c|c|c|c|c|c}
$d$        & $1$ & $2$ & $3$ & $4$ & $5$ & $6$ & $7$ & $8$ & $9$        \\ \hline
$\crke(d)$ & $0$ & $0$ & $1$ & $1$ & $2$ & $3$ & $4$ & $5$ & $5$ or $6$
\end{tabular}
\end{table}
In particular, these results show that it is only a small-dimensional phenomenon that empty lattice simplices have (almost) cyclic quotient groups.

\section{Quotient Groups of Lattice Simplices and Problem Statement}
\label{sect:basics-on-quotient-groups}

In this section, we introduce the basic definitions needed for the rest of the paper and precisely state the main problem that we are interested in.
Let's start with some standard terminology:
Given a subset $S \subseteq \R^d$, we denote by $\conv(S)$ the convex hull of~$S$ and with $\inter(S)$ the interior of~$S$ with respect to the standard topology on~$\R^d$.
Furthermore, if~$S$ is Lebesgue measurable, then $\vol(S)$ denotes its \emph{volume} (i.e., its Lebesgue measure) and $\Vol(S) = d!\vol(S)$ denotes its \emph{normalized volume}.
The standard unit vectors in $\R^d$ are given by $e_1,\ldots,e_d$, and with $[r] := \{1,2,\ldots,r\}$ we abbreviate the set of the first~$r$ natural numbers.

For us, a \emph{lattice} $\Lambda \subseteq \R^d$ is a discrete subgroup of full rank, the most prominent example being the standard integer lattice~$\Z^d$.
Most of the lattices that we consider in the sequel are \emph{sublattices} $\Lambda \subseteq \Z^d$ of the integer lattice, and they are uniquely defined as $\Lambda = A \Z^d$, for some matrix $A \in \Z^{d \times d}$ that is invertible over the rationals.
Such a matrix~$A$ is called a \emph{basis} of~$\Lambda$.
Note, however, that lattice bases are not unique.
Given a lattice $\Lambda$ its \emph{polar lattice} is defined by
\[
\Lambda^\star = \left\{ x \in \R^d : x^\intercal z \in \Z, \,\forall z \in \Lambda \right\}\,.
\]
If $A$ is a basis of~$\Lambda$, then $A^{-\intercal}$ is a basis of~$\Lambda^\star$; and for lattices $\Lambda \subseteq \Gamma$ polarity reverses the inclusion, that is, $\Gamma^\star \subseteq \Lambda^\star$.
Finally, we say that a vector $v = (v_1,\ldots,v_d)^\intercal \in \Z^d$ is \emph{primitive}, if the line segment joining~$v$ and the origin~$\zero$ contains exactly two lattice points, which is equivalent to saying that $\gcd(v_1,\ldots,v_d) = 1$.
We refer to the book of Gruber~\cite{gruber2007convex} for a thorough introduction to lattices, convex polytopes, and their interaction.

\begin{definition}
A \emph{lattice simplex}~$\Delta$ in~$\R^d$ is the convex hull of $d+1$ affinely independent lattice points, that is, there are affinely independent $v_0,v_1,\ldots,v_d \in \Z^d$ such that $\Delta = \conv\{v_0,v_1,\ldots,v_d\}$.
Such a lattice simplex $\Delta \subseteq \R^d$ is called \emph{hollow}, if $\inter(\Delta) \cap \Z^d = \emptyset$, and it is called \emph{empty}, if $\Delta \cap \Z^d = \{v_0,v_1,\ldots,v_d\}$.
\end{definition}

We say that two lattice simplices $\Delta,\Delta' \subseteq \R^d$ are \emph{unimodularly equivalent}, denoted by $\Delta \simeq \Delta'$, if there exists a unimodular matrix $U \in \GL_d(\Z)$ and a translation vector $t \in \Z^d$ such that $\Delta' = U \Delta + t$.
Because $U \Z^d = \Z^d$, for every $U \in \GL_d(\Z)$, we get that the properties of a lattice simplex of being empty or hollow are invariant under unimodular equivalence.
Note that we did not fix the labelling of the vertices of the simplices, so that this notion of equivalence includes the possibility to permute them.
This is relevant for comparison with the notion of the Hermite normal form of square integer matrices (see~\cref{thm:HNF}).

For our purposes we can assume without loss of generality that the origin~$\zero$ is one of the vertices of any given lattice simplex $\Delta \subseteq \R^d$, so that we can associate the invertible integer matrix $A = (v_1,\ldots,v_d) \in \Z^{d \times d}$ to $\Delta = \conv\{\zero,v_1,\ldots,v_d\}$.
Writing
\[
S_d := \conv\{\zero,e_1,\ldots,e_d\} = \left\{ x \in \R^d : \sum_{i=1}^d x_i \leq 1 \textrm{ and } x_i \geq 0 \textrm{ for every } 1 \leq i \leq d \right\}
\]
for the \emph{standard simplex}, we get the representation $\Delta = A S_d$.

Now, given a lattice simplex $\Delta = A S_d$, it induces two sublattices
\[
\Gam{\Delta} := A \Z^d = \Z a_1 + \ldots + \Z a_d \subseteq \Z^d \quad\textrm{ and }\quad \Gamt{\Delta} := A^\intercal \Z^d = \Z r_1 + \ldots + \Z r_d \subseteq \Z^d\,,
\]
where $a_1,\ldots,a_d \in \Z^d$ and $r_1,\ldots,r_d \in \Z^d$ are the columns and rows of the matrix~$A$, respectively.
The first sublattice~$\Gam{\Delta}$ can be seen as being the lattice that is generated by the edge directions of~$\Delta$.
Conversely, given a sublattice $\Gamma \subseteq \Z^d$ and a basis $B$ of~$\Gamma$, one may associate two lattice simplices to~$\Gamma$, whose non-zero vertices correspond to either the columns or the rows of~$B$.

A disadvantage of~$\Gam{\Delta}$ is that one cannot read off~$\Delta$ from a given basis of~$\Gam{\Delta}$ up to unimodular equivalence.
For instance, both $\{e_1,e_2,2 e_3\}$ and $\{e_1,e_2,(1,1,2)^\intercal\}$ are a basis of the lattice $\Gamma = \Z^2 \times (2 \Z)$, but the lattice simplices $\Delta = \conv\{\zero,e_1,e_2,2 e_3\}$ and $\Delta' = \conv\{\zero,e_1,e_2,(1,1,2)^\intercal\}$ are not unimodularly equivalent, because~$\Delta$ has a non-primitive edge and~$\Delta'$ is empty.
For the sublattice $\Gamt{\Delta}$, however, we can infer~$\Delta$ up to a finite explicit list of unimodular equivalence classes.
This makes the lattice $\Gamt{\Delta}$ the more natural choice when we want to identify lattice simplices with sublattices.
More precisely,

\begin{proposition}
\label{prop:lattice-simplex-vs-sublattice}
Let $\Delta,\Delta' \subseteq \R^d$ be lattice simplices which both have the origin as a vertex.
Then, we have
\[
\Delta \simeq \Delta' \quad\textrm{ if and only if }\quad \Gamt{\Delta} = P \, U_k \, \Gamt{\Delta'} \,,
\]
for some permutation matrix $P \in \Z^{d \times d}$ and some $k \in \{0,1,\ldots,d\}$, where $U_k \in \Z^{d \times d}$ is the unimodular matrix whose $(i,k)$-entries equal $-1$, for all $1 \leq i \leq d$, whose $(i,i)$-entries equal $1$, for all $1 \leq i \leq d$ with $i \neq k$, and whose remaining entries equal~$0$; in particular,~$U_0$ is the identity matrix.
\end{proposition}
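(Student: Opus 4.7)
The plan is to translate unimodular equivalence, the ambient affine relation $\Delta' = U\Delta + t$, into an algebraic identity between the matrix representations $A$ and $A'$ of $\Delta$ and $\Delta'$, and then pass to the transposed lattices $\Gamt{\Delta}$ and $\Gamt{\Delta'}$.

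For the forward implication, since both simplices contain $\zero$ as a vertex, the equality $\Delta' = U\Delta + t$ forces $\zero = U v_k + t$ for some vertex $v_k$ of $\Delta$; equivalently, $t = -U v_k$ for some $k \in \{0,1,\ldots,d\}$, with the convention $v_0 := \zero$, and hence $\Delta' = U(\Delta - v_k)$. The case $k = 0$ is immediate: $A' = U A \Pi$ for some permutation matrix $\Pi$ accounting for the vertex ordering, giving $\Gamt{\Delta} = \Pi \, U_0 \, \Gamt{\Delta'}$. For $k \geq 1$, the idea is to introduce the translation matrix $M \in \GL_d(\Z)$ whose column $j$ is $e_j - e_k$ for $j \neq k$ and whose column $k$ is $-e_k$; a direct calculation shows that $M^\intercal = U_k$, that $U_k$ is an involution, and that $AM$ is a matrix representation of $\Delta - v_k$. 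Allowing for the arbitrary ordering of vertices of $\Delta'$, we obtain $A' = U A M \Pi$ for some permutation matrix $\Pi$, and transposing (using $U^\intercal \Z^d = \Z^d$) yields $\Gamt{\Delta'} = \Pi^\intercal U_k \Gamt{\Delta}$, equivalently $\Gamt{\Delta} = U_k \Pi \, \Gamt{\Delta'}$ by $U_k^2 = I$. To reach the exact form of the proposition, I would then verify the short commutation identity $U_k \Pi = \Pi \, U_{\pi^{-1}(k)}$, where $\pi$ is the permutation associated to $\Pi$, by a case distinction on whether $\pi(j) = k$ or not; setting $P := \Pi$ and relabeling the index yields $\Gamt{\Delta} = P\, U_k\, \Gamt{\Delta'}$.

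For the converse, assume $\Gamt{\Delta} = P U_k \Gamt{\Delta'}$. Then $A^\intercal$ and $P U_k (A')^\intercal$ are bases of the same lattice, so $A^\intercal = P U_k (A')^\intercal V$ for some $V \in \GL_d(\Z)$. Transposing gives $A = V^\intercal A' M \Pi$ with $M = U_k^\intercal$ and $\Pi = P^\intercal$, and applying both sides to $S_d$ together with the symmetry $\Pi S_d = S_d$ (the standard simplex is invariant under coordinate permutations) yields $\Delta = V^\intercal (A' M) S_d$. For $k \geq 1$, the product $A'M$ is precisely the representation of $\Delta' - v'_k$ (by the same computation as in the forward direction), so $\Delta = V^\intercal(\Delta' - v'_k)$, equivalently $\Delta' = (V^\intercal)^{-1}\Delta + v'_k$, which is a unimodular equivalence; the case $k = 0$ reduces to $\Delta' = (V^\intercal)^{-1}\Delta$ with no translation.

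The main obstacle is not conceptual but notational: correctly setting up the translation matrix $M$, checking the identities $M^\intercal = U_k$ and $U_k^2 = I$, and converting the form $U_k \Pi$ that emerges naturally from the calculation into the required form $P U_k$ via the commutation relation with permutations. Geometrically, the content of the proposition is simply that choosing any of the $d+1$ vertices of $\Delta$ to sit at the origin corresponds to left-multiplying $\Gamt{\Delta}$ by the matrix $U_k$, modulo a further permutation of the remaining vertices.
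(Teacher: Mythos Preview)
Your proposal is correct and follows essentially the same approach as the paper: both arguments hinge on the observation that translating a simplex so that its $k$-th vertex moves to the origin corresponds to right-multiplying its defining matrix by $U_k^\intercal$ (your matrix $M$), and then transposing to pass to the row-lattice. The only cosmetic difference is that the paper translates $\Delta'$ by one of its vertices and arrives directly at $\Gamt{\Delta} = Q^\intercal U_k \Gamt{\Delta'}$, whereas you translate $\Delta$ and first obtain $\Gamt{\Delta} = U_k \Pi \, \Gamt{\Delta'}$, requiring the extra commutation identity $U_k \Pi = \Pi\, U_{\pi^{-1}(k)}$ (which is indeed valid) to match the stated form; choosing to translate $\Delta'$ as the paper does would spare you that step.
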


\begin{proof}
Write $\Delta = A S_d$ and $\Delta' = B S_d$, for suitable integer matrices $A,B \in \Z^{d \times d}$.
Then, $\Delta \simeq \Delta'$ means that there is a matrix $U \in \GL_d(\Z)$ and a vector $t \in \Z^d$ such that $\Delta' - t = U \Delta$.
Since both simplices $\Delta,\Delta'$ have a vertex at the origin, $t$ must be a vertex of~$\Delta'$, which means that there is an index $k \in \{0,1,\ldots,d\}$ such that $t = B e_k$, where we write $e_0 = \zero$.
Observe that $S_d - e_k = U_k^\intercal S_d$, so that the conditions above can be written as $B U_k^\intercal S_d = UA S_d$.
This is equivalent to the existence of a permutation matrix $Q$ such that $B U_k^\intercal Q = UA$.
This in turn is equivalent to
\[
\Gamt{\Delta} = A^\intercal \Z^d = (U^{-1} B U_k^\intercal Q)^\intercal \Z^d = Q^\intercal U_k B^\intercal U^{-\intercal} \Z^d = Q^\intercal U_k B^\intercal \Z^d = Q^\intercal U_k \Gamt{\Delta'} \,,
\]
as $U^{-\intercal}$ is again unimodular.
\end{proof}

As a particular case of the previous statement, choosing two bases $V,W$ of a sublattice $\Gamma \subseteq \Z^d$ gives unimodular equivalent lattice simplices $V^\intercal S_d$ and $W^\intercal S_d$.
This property motivates our choice to consider the ``row-lattice''~$\Gamt{\Delta}$ of~$\Delta$, rather than the ``column-lattice''~$\Gam{\Delta}$.
In toric geometry however, the latter choice is the more natural (see, e.g.~\cite{barilebernardiborisovkantor2011onempty} or~\cite[Ch.~2]{fulton1993introduction}).
We see below, that for our purposes it doesn't make a difference whether we consider the integral row-span or column-span, so our results apply to the respective questions and properties in toric geometry without restriction.

Our main object of interest associated to a lattice simplex is its quotient group.
In fact, this comes in two versions:

\begin{definition}
\label{def:quotient-groups}
For a lattice simplex $\Delta = A S_d$, we let
\[
\qG{\Delta} := \Z^d / \,\Gam{\Delta} = \Z^d / (A \Z^d) \quad\textrm{ and }\quad \qGt{\Delta} := \Z^d / \,\Gamt{\Delta} = \Z^d / (A^\intercal \Z^d)
\]
be its \emph{quotient groups}.
\end{definition}

\noindent The quotient groups of~$\Delta = A S_d$ are finite abelian groups of the same order (cf.~\cite[Ch.~21]{gruber2007convex})
\[
\card{\qG{\Delta}} = [\Z^d : \Gam{\Delta}] = \card{\det(A)} = \card{\det(A^\intercal)} = [\Z^d : \Gamt{\Delta}] = \card{\qGt{\Delta}} \,.
\]
For any invertible matrix $M \in \Z^{d \times d}$ the quotient group $\Z^d / (M \Z^d)$ is determined by its so-called \emph{Smith normal form}, which is the unique diagonal matrix $S = \diag(m_1,\ldots,m_d)$ such that there exist unimodular matrices $U,V \in \GL_d(\Z)$ with $M = U S V$ and the $m_1,\ldots,m_d$ are positive integers satisfying $m_d \mid m_{d-1} \mid \cdots \mid m_1$ (cf.~Cohen~\cite[Ch.~2]{cohen1993acourse}).
These numbers $m_1,\ldots,m_d$ are called the \emph{elementary divisors} of~$M$.
In particular, we have
\begin{align}
\Z^d / (M \Z^d) &= \Z^d / (U S V \Z^d) = \Z^d / (U S \Z^d) \cong (U^{-1} \Z^d) / (S \Z^d) = \Z^d / (S \Z^d) \,,\label{eqn:quotient-smith}
\end{align}
and therefore
\begin{align}
\Z^d / (M \Z^d) \cong \Z_{m_r} \times \Z_{m_{r-1}} \times \ldots \times \Z_{m_1} \,,\label{eqn:quotient-smith-product}
\end{align}
where $r$ is the largest index such that the elementary divisor $m_r > 1$.
The Smith normal form helps us to derive the important observation that for every lattice simplex, the associated quotient groups in \cref{def:quotient-groups} are isomorphic:

\begin{proposition}
\label{prop:A-vs-A-transpose-quotients}
Let $A \in \Z^{d \times d}$ be an invertible integer matrix.
Then, the sublattices $A \Z^d$ and $A^\intercal \Z^d$ have isomorphic quotient groups in~$\Z^d$, that is,
\[
\Z^d / (A \Z^d) \cong \Z^d / (A^\intercal \Z^d) \,.
\]
In particular, for every lattice simplex $\Delta$, we have $\qG{\Delta} \cong \qGt{\Delta}$.
\end{proposition}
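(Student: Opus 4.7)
The plan is to observe that $A$ and $A^\intercal$ admit the same Smith normal form, and then to apply the isomorphism~\eqref{eqn:quotient-smith-product} to both matrices. The key point is that transposition preserves both the diagonal structure of the Smith form and the unimodularity of the flanking matrices, so no new work is required beyond what the excerpt has already set up.

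Concretely, I would write $A = U S V$ with $U, V \in \GL_d(\Z)$ and $S = \diag(m_1, \ldots, m_d)$ satisfying the divisibility chain $m_d \mid m_{d-1} \mid \cdots \mid m_1$, as guaranteed by the Smith normal form (cf.~\cite[Ch.~2]{cohen1993acourse}). Transposing this identity yields
\[
A^\intercal = V^\intercal S^\intercal U^\intercal = V^\intercal S U^\intercal,
\]
since $S$ is diagonal. Because $V^\intercal, U^\intercal \in \GL_d(\Z)$ and the divisors $m_i$ are unchanged, this display is itself a valid Smith decomposition of $A^\intercal$. Hence $A$ and $A^\intercal$ share the same elementary divisors.

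Applying~\eqref{eqn:quotient-smith-product} once with $M = A$ and once with $M = A^\intercal$ then produces
\[
\Z^d / (A \Z^d) \cong \Z_{m_r} \times \Z_{m_{r-1}} \times \ldots \times \Z_{m_1} \cong \Z^d / (A^\intercal \Z^d),
\]
where $r$ is the largest index with $m_r > 1$. The final assertion $\qG{\Delta} \cong \qGt{\Delta}$ for a lattice simplex $\Delta = A S_d$ then follows immediately from \cref{def:quotient-groups}.

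There is essentially no obstacle: the result is a direct consequence of the symmetry of the Smith normal form under transposition, which the excerpt already establishes. Should one wish to bypass appealing to the uniqueness of the Smith form, the same conclusion can be reached by repeating verbatim the chain of isomorphisms in~\eqref{eqn:quotient-smith} with $M = A^\intercal$, absorbing the unimodular factors $V^\intercal$ and $U^\intercal$ on the left and right of $S$ respectively to identify $\Z^d/(A^\intercal \Z^d)$ with $\Z^d/(S \Z^d)$.
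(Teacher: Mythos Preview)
Your proof is correct and essentially identical to the paper's: both argue that $A = USV$ implies $A^\intercal = V^\intercal S U^\intercal$, so $A$ and $A^\intercal$ share the Smith normal form~$S$, whence the quotients agree. The only cosmetic difference is that the paper invokes~\eqref{eqn:quotient-smith} directly while you cite its consequence~\eqref{eqn:quotient-smith-product}, and you add a short remark on avoiding uniqueness of the Smith form; neither changes the substance.
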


\begin{proof}
In view of~\eqref{eqn:quotient-smith} it suffices to show that~$A$ and~$A^\intercal$ have identical Smith normal forms.
To this end, let $U,V \in \GL_d(\Z)$ be such that $A = U S V$ and $S$ is the Smith normal form of~$A$.
Then, we have $A^\intercal = (U S V)^\intercal = V^\intercal S^\intercal U^\intercal = V^\intercal S U^\intercal$, so that indeed~$S$ is the Smith normal form of~$A^\intercal$ as well.
\end{proof}

Finally, let us note that the isomorphism class of the quotient groups $\qG{\Delta}$ and $\qGt{\Delta}$ only depend on the isomorphism class of a lattice simplex~$\Delta = A S_d$, and not, for instance, on the choice of the origin vertex.

\begin{proposition}
\label{prop:isomporphism-class-quotients}
Let $\Delta,\Delta' \subseteq \R^d$ be lattice simplices which both have the origin as a vertex.
If $\Delta \simeq \Delta'$, then $\qG{\Delta} \cong \qG{\Delta'}$ and $\qGt{\Delta} \cong \qGt{\Delta'}$.
\end{proposition}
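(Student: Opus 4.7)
The plan is to leverage \cref{prop:lattice-simplex-vs-sublattice} together with the Smith normal form argument already used in \cref{prop:A-vs-A-transpose-quotients}. Since $\Delta \simeq \Delta'$, that proposition produces a permutation matrix $P \in \Z^{d\times d}$ and an index $k \in \{0,1,\ldots,d\}$ with
\[
\Gamt{\Delta} = P \, U_k \, \Gamt{\Delta'}\,.
\]
Crucially, both $P$ and $U_k$ lie in $\GL_d(\Z)$, so the product $M := P U_k$ is unimodular. Writing $\Delta' = A' S_d$, this says $\Gamt{\Delta} = M \cdot (A')^\intercal \Z^d = (M (A')^\intercal)\Z^d$.

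Next I would invoke the observation behind~\eqref{eqn:quotient-smith}: left-multiplication of a $\GL_d(\Q)$-invertible integer matrix by a unimodular matrix does not change its Smith normal form, because if $A' = U S V$ is a Smith decomposition of $(A')^\intercal$, then $M (A')^\intercal = (MU) S V$ is a Smith decomposition of $M (A')^\intercal$ with the same diagonal part $S$. Hence \eqref{eqn:quotient-smith-product} gives
\[
\qGt{\Delta} = \Z^d / (M(A')^\intercal \Z^d) \cong \Z^d / (S \Z^d) \cong \Z^d / ((A')^\intercal \Z^d) = \qGt{\Delta'} \,.
\]

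Finally, by \cref{prop:A-vs-A-transpose-quotients} we have $\qG{\Delta} \cong \qGt{\Delta}$ and $\qG{\Delta'} \cong \qGt{\Delta'}$, which combined with the isomorphism just established yields $\qG{\Delta} \cong \qG{\Delta'}$ as well. There is no real obstacle here: the statement is essentially a bookkeeping corollary of \cref{prop:lattice-simplex-vs-sublattice} and the Smith-normal-form invariance, with the only point to be careful about being that the translation by a vertex of $\Delta'$ in the unimodular equivalence shows up as the extra factor $U_k$, but this factor is itself unimodular and therefore harmless for the quotient-group computation.
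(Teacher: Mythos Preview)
Your proof is correct and follows essentially the same route as the paper: invoke \cref{prop:lattice-simplex-vs-sublattice} to write $\Gamt{\Delta} = P U_k \Gamt{\Delta'}$, use that $PU_k$ is unimodular to conclude $\qGt{\Delta}\cong\qGt{\Delta'}$, and then appeal to \cref{prop:A-vs-A-transpose-quotients} for the other quotient. The only cosmetic difference is that the paper passes directly through the identification $\Z^d/(M\Lambda)\cong (M^{-1}\Z^d)/\Lambda=\Z^d/\Lambda$, whereas you route the same observation through the Smith normal form; also note the small slip where you wrote ``$A' = U S V$'' but meant ``$(A')^\intercal = U S V$''.
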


\begin{proof}
In view of \cref{prop:A-vs-A-transpose-quotients} it suffices to establish only one of the claimed isomorphisms.
By \cref{prop:lattice-simplex-vs-sublattice} there is a permutation matrix $P$ and an index $k \in \{0,1,\ldots,d\}$ such that $\Gamt{\Delta} = P \, U_k \, \Gamt{\Delta'}$.
Therefore,
\[
\qGt{\Delta} = \Z^d / \, \Gamt{\Delta} = \Z^d / \, (P \, U_k \, \Gamt{\Delta'}) \cong ((P \, U_k)^{-1} \Z^d) / \, \Gamt{\Delta'} = \Z^d / \, \Gamt{\Delta'} = \qGt{\Delta'} \,,
\]
because $P \, U_k$ is unimodular.
\end{proof}

%
%
%
%

\subsection*{The cyclicity rank of (empty) lattice simplices}

A lattice simplex $\Delta \subseteq \R^d$ is called \emph{cyclic}, if its quotient group $G_\Delta$ is a cyclic group.
More generally, for $\Delta = A S_d$, we write $\crk(G_\Delta) = r$, if exactly~$r$ elementary divisiors of~$A$ are bigger than one (see~\eqref{eqn:quotient-smith-product}), that is, the quotient group~$G_\Delta$ of~$\Delta$ splits into~$r$ factors of cyclic subgroups and no less.
\Cref{fig:examples} shows two lattice triangles, one is cyclic while the other is not.

\begin{figure}[h]
\hfill\includegraphics[scale=1.2,page=1]{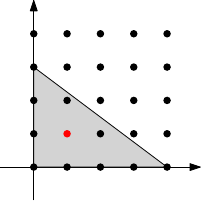}
\hfill\includegraphics[scale=1.2,page=2]{triangle-examples.pdf}\hfill\,
\caption{The triangle $\Delta = \conv\{\zero,\binom{4}{0},\binom{0}{3}\}$ on the left is cyclic with $G_\Delta \cong \Z_{12}$ generated by $\binom{1}{1}$; the red lattice point. The triangle $\Delta' = \conv\{\zero,\binom{3}{0},\binom{0}{3}\}$ however is not cyclic, and has $G_{\Delta'} \cong \Z_3 \times \Z_3$ with generators $\binom{1}{0}$ and~$\binom{0}{1}$; the two blue lattice points.}
\label{fig:examples}
\end{figure}

\begin{definition}[Cyclicity rank of a simplex]
\label{def:cyc-rank-simplex}
Given a lattice simplex $\Delta \subseteq \R^d$, we define its \emph{cyclicity rank} as $\crk(\Delta) := \crk(G_\Delta)$.
Moreover, we let
\begin{align*}
\crke(d) &:= \max\left\{\crk(\Delta) : \Delta \subseteq \R^d \text{ an empty lattice simplex}\right\} \,,\\
\crkh(d) &:= \max\left\{\crk(\Delta) : \Delta \subseteq \R^d \text{ a hollow lattice simplex}\right\} \,, \textrm{and}\\
\crks(d) &:= \max\left\{\crk(\Delta) : \Delta \subseteq \R^d \text{ an arbitrary lattice simplex}\right\} \,.\\
\end{align*}
\end{definition}

\noindent Clearly, for every $d \in \N$, we have
\[
\crke(d) \leq \crkh(d) \leq \crks(d) \,,
\]
and
\begin{align}
\crk_*(d) \leq \crk_*(d+1) \quad \textrm{for each choice of} \quad * \in \{\text{e}, \text{h}, \text{s}\} \,.\label{eqn:monotonicity-crks}
\end{align}
The maximal cyclicity rank is not an interesting parameter on the whole class of lattice simplices, nor on the class of hollow lattice simplices, because of the following observation:

\begin{proposition}
\label{prop:crk-general-and-hollow}
Let $\Delta \subseteq \R^d$ be a lattice simplex.
\begin{enumerate}[label={(\roman*)}]
 \item For every $d \in \N$, we have $\crks(d) = d$.
 \item $\crkh(1) = 0$ and $\crkh(d) = d$, for every $d \geq 2$.
\end{enumerate}
\end{proposition}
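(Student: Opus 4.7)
The plan is to establish matching upper and lower bounds, with a single explicit example covering almost all the required cases.

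For the upper bound, every lattice $d$-simplex can be written as $\Delta = A S_d$ for some invertible $A \in \Z^{d \times d}$, so its quotient group $\qG{\Delta} = \Z^d/(A\Z^d)$ has at most $d$ elementary divisors by~\eqref{eqn:quotient-smith-product}. Hence $\crk(\Delta) \leq d$ for every lattice simplex in~$\R^d$, which already gives $\crks(d) \leq d$ and $\crkh(d) \leq d$ for every $d \in \N$.

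For the lower bound in~(i), and in~(ii) when $d \geq 2$, the candidate I would try is the single example $\Delta_d := 2 S_d$, whose associated matrix $A = \diag(2,\ldots,2)$ is already in Smith normal form. All $d$ elementary divisors equal $2 > 1$, so $\qG{\Delta_d} \cong \Z_2 \times \cdots \times \Z_2$ and $\crk(\Delta_d) = d$; this settles~(i) at once. For~(ii) in the range $d \geq 2$, I would additionally observe that any lattice point $z \in \inter(\Delta_d) \cap \Z^d$ would have to satisfy $z_i \geq 1$ for every $i$ and $\sum_{i=1}^d z_i < 2$, which is impossible as soon as $d \geq 2$. Therefore $\Delta_d$ is hollow in that range, yielding $\crkh(d) \geq d$ and hence $\crkh(d) = d$.

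Finally, the case $\crkh(1) = 0$ is handled by a direct classification: up to unimodular equivalence, every $1$-dimensional lattice simplex is of the form $[0,n]$ for some $n \in \N$, and its interior $(0,n)$ avoids $\Z$ only when $n = 1$. The unique hollow $1$-simplex has trivial quotient group, hence cyclicity rank~$0$. There is no real obstacle to this plan; the dilated standard simplex $2 S_d$ is tailor-made for the proposition, realising the maximal cyclicity rank and remaining hollow for all $d \geq 2$ simultaneously. The only subtlety is the degenerate case $d = 1$, where $2 S_1 = [0,2]$ ceases to be hollow and therefore requires the separate classification argument above.
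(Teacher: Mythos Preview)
Your proof is correct and follows essentially the same approach as the paper: the upper bound comes from the elementary-divisor decomposition~\eqref{eqn:quotient-smith-product}, and the example $2S_d$ with $G_{2S_d} \cong (\Z_2)^d$ attains it, being hollow precisely when $d \geq 2$. The paper's argument is the same, only slightly terser (it does not spell out the hollowness verification for $2S_d$ that you include).
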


\begin{proof}
(i): If $\Delta = A S_d$ is a lattice simplex, then by~\eqref{eqn:quotient-smith-product} its quotient group is determined by the elementary divisors of~$A$.
In particular it consists of at most~$d$ factors, and hence, $\crk(\Delta) \leq d$.
An example attaining this upper bound is given by $\Delta = 2 S_d$, for any dimension $d \geq 1$.
In fact, we have $G_\Delta = (\Z_2)^d$ and hence $\crk(\Delta) = d$.

(ii): The simplex $\Delta = 2 S_d$ is hollow, whenever $d \geq 2$.
Thus, $\crkh(d) = d$, for $d \geq 2$.
Up to unimodular equivalence, the only hollow lattice $1$-simplex is the unit interval $[0,1]$.
This clearly has trivial quotient group, and hence $\crkh(1) = 0$.
\end{proof}

\noindent Our main interest thus lies in the case of \emph{empty} lattice simplices:

\begin{mproblem*}
Determine the constant $\crke(d)$ for a given dimension, or at least understand its asymptotic behavior in terms of~$d$.
\end{mproblem*}

Prior to this work, the parameter $\crke(d)$ is only known in dimensions at most four:
\begin{align}
\crke(1) = \crke(2) = 0 \quad \textrm{ and } \quad \crke(3) = \crke(4) = 1 \quad \textrm{ and } \quad \crke(5) \geq 2\,.\label{eqn:crke-dim-le-4}
\end{align}
This holds as every empty lattice $1$-simplex is unimodularly equivalent to $[0,1]$, and every empty lattice $2$-simplex is unimodularly equivalent to~$S_2$.
Empty lattice $3$-simplices have been characterized by White~\cite{white1964lattice} and come in a family that can be conveniently described by two parameters, allowing to read off that they are all cyclic.

\begin{theorem}[White~\cite{white1964lattice} (cf.~Seb\H{o}~\cite{sebo1999anintroduction})]
\label{thm:white}
Every empty lattice $3$-simplex is unimodularly equivalent to
\[
T(p,q) := \conv\left\{ \zero, (1,0,0)^\intercal,(0,1,0)^\intercal,(1,p,q)^\intercal\right\}\,,
\]
for some integers $1 \leq p < q$ with $\gcd(p,q)=1$.

In particular, there are two opposite edges of the simplex lying in parallel consecutive lattice planes, and thus these simplices belong to the class of Cayley polytopes.
\end{theorem}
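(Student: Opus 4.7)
The plan is to split the proof into two parts: (A) a structural claim that every empty lattice $3$-simplex $\Delta$ has two opposite edges contained in consecutive parallel lattice hyperplanes, and (B) a routine normalization reducing any such $\Delta$ to the form $T(p,q)$. Part (A) is the heart of the argument; part (B) is then a concrete sequence of unimodular moves.

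For part (A), I would translate so that $v_0 = \zero$ and set $A = (v_1 \mid v_2 \mid v_3) \in \Z^{3 \times 3}$. The half-open parallelepiped $\Pi = A \cdot [0,1)^3$ contains exactly $q := |\det A| = |\qG{\Delta}|$ lattice points, one per coset of $\Z^3/(A\Z^3)$. Emptiness of $\Delta$ then rephrases as a \emph{defect inequality}: every nonzero representative $x = \alpha_1 v_1 + \alpha_2 v_2 + \alpha_3 v_3$ with $(\alpha_1, \alpha_2, \alpha_3) \in [0,1)^3$ satisfies $\alpha_1 + \alpha_2 + \alpha_3 > 1$. Re-running the argument with each of the four vertices playing the role of the origin yields four such inequalities per nonzero coset. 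The combinatorial core of (A) is to show, using these four constraints together with the additivity in $\qG{\Delta}$, that one can choose a generator $g$ of $\qG{\Delta}$ with one of its parallelepiped coordinates equal to zero; such a $g$ certifies that two opposite edges of $\Delta$ lie in consecutive lattice hyperplanes, and as a by-product recovers the cyclicity of $\qG{\Delta}$ (attributed to Barile et al.\ in the introduction). My plan here follows Seb\H{o}'s extremal approach: take a nonzero $g$ minimizing $\alpha_1 + \alpha_2 + \alpha_3$ subject to the strict emptiness inequality, and apply the defect constraints to $g, 2g, 3g, \ldots$, switching the role of the origin among the four vertices when useful, to force one coordinate of $g$ to vanish.

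For part (B), assume that $\{v_0, v_1\}$ and $\{v_2, v_3\}$ are the two opposite edges in consecutive lattice planes. After translating $v_0$ to the origin and applying a unimodular map, these planes become $\{x_1 = 0\}$ and $\{x_1 = 1\}$. Emptiness forces the edge $\{v_0, v_1\}$ to be primitive, so $v_1 = (0, a, b)^\intercal$ with $\gcd(a, b) = 1$; a $\GL_2(\Z)$-action on the last two coordinates brings $v_1$ to $e_2$. Writing $v_2 = (1, \alpha, \beta)^\intercal$ and $v_3 = (1, \gamma, \delta)^\intercal$, the shear $(x_1, x_2, x_3) \mapsto (x_1, x_2 - \alpha x_1, x_3 - \beta x_1)$ sends $v_2$ to $e_1$ and $v_3$ to $(1, p, q)^\intercal$ with $p := \gamma - \alpha$ and $q := \delta - \beta$. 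Primitivity of $\{v_2, v_3\}$ gives $\gcd(p, q) = 1$, and the further shears $(x_1, x_2, x_3) \mapsto (x_1, x_2 + kx_3, x_3)$ together with a sign flip $x_3 \mapsto -x_3$ let me arrange $0 \leq p < q$; the case $p = 0$ degenerates to the unimodular simplex $S_3$, and otherwise $1 \leq p < q$ as stated. The main obstacle is thus clearly step (A), while (B) is essentially bookkeeping.
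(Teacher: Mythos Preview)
The paper does not supply a proof of this theorem at all: \cref{thm:white} is stated with attribution to White and Seb\H{o} and is used as a black box (for the discussion after~\eqref{eqn:crke-dim-le-4} and in the proof of \cref{lem:necessary-conds-empty-p-power}~(iii)). So there is no ``paper's own proof'' to compare against.

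On the merits of your sketch: your decomposition into (A) the structural two-opposite-edges-in-consecutive-planes claim and (B) the normalization is exactly the standard one, and your part~(B) is correct as written (the relabeling of vertices you implicitly make, sending the simplex to $\conv\{\zero,e_2,e_1,(1,p,q)^\intercal\}$, is harmless). However, part~(A) is the entire content of White's theorem, and what you have written there is not a proof but a description of Seb\H{o}'s strategy. The sentence ``apply the defect constraints to $g,2g,3g,\ldots$, switching the role of the origin among the four vertices when useful, to force one coordinate of $g$ to vanish'' hides the actual work: one has to track how the four barycentric-coordinate systems interact under the group operation and argue carefully that a minimizer cannot have all three coordinates strictly positive. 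You have correctly identified this as the obstacle, but you have not overcome it; as it stands, (A) is a plan rather than an argument.
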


The fact that also in dimension $d=4$, every empty lattice simplex is cyclic, has been proven by Barile et al.~\cite{barilebernardiborisovkantor2011onempty}, who also exhibited an example of a non-cyclic empty lattice $5$-simplex.
The four-dimensional result was instrumental for Iglesias-Vali\~{n}o \& Santos~\cite{iglesiassantos2021thecomplete} to achieve the complete classification of empty lattice $4$-simplices.

\section{Reductions and \texorpdfstring{$p$}{p}-power simplices}

In this section, we collect a few reductions for the main problem with the goal of identifying a very concrete class of lattice simplices that grasp the parameter $\crke(d)$ and are very convenient to deal with.
We start with the observation that given an empty lattice simplex corresponding to a sublattice $\Gamma \subseteq \Z^d$, every lattice simplex corresponding to an intermediate lattice is also empty.

\begin{lemma}
\label{lem:intermediate-empty}
Let $\Delta,\Delta' \subseteq \R^d$ be lattice simplices such that $\Gamma^\intercal_\Delta \subseteq \Gamma^\intercal_{\Delta'} \subseteq \Z^d$.
Then, if $\Delta$ is empty, then~$\Delta'$ is empty as well.
\end{lemma}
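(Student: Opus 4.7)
The plan is to translate the emptiness of a lattice simplex $\Delta = A S_d$ into a condition about the polar lattice intersecting the standard simplex $S_d$, and then exploit the fact that polarity reverses inclusion.

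First, I would observe that the linear bijection $y = Ax$ identifies the sets $\Z^d \cap \Delta$ and $A^{-1}\Z^d \cap S_d$, and sends the vertices $\{\zero, A e_1, \ldots, A e_d\}$ of $\Delta$ precisely to $\{\zero, e_1, \ldots, e_d\}$. Consequently, $\Delta$ is empty if and only if
\[
A^{-1}\Z^d \cap S_d = \{\zero, e_1, \ldots, e_d\}\,.
\]
Since $\Gamt{\Delta} = A^\intercal \Z^d$ has basis $A^\intercal$, the polar-basis identity recalled in the preliminaries gives $(\Gamt{\Delta})^\star = (A^\intercal)^{-\intercal}\Z^d = A^{-1}\Z^d$. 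So emptiness of $\Delta$ is equivalent to $(\Gamt{\Delta})^\star \cap S_d = \{\zero, e_1, \ldots, e_d\}$, and the same equivalence applies verbatim to $\Delta'$.

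Next, since polarity reverses inclusions and $(\Z^d)^\star = \Z^d$, the hypothesis $\Gamt{\Delta} \subseteq \Gamt{\Delta'} \subseteq \Z^d$ dualizes to
\[
\Z^d \subseteq (\Gamt{\Delta'})^\star \subseteq (\Gamt{\Delta})^\star\,.
\]
Intersecting with the bounded set $S_d$, and using $\Z^d \cap S_d = \{\zero, e_1, \ldots, e_d\}$, produces the sandwich
\[
\{\zero, e_1, \ldots, e_d\} \subseteq (\Gamt{\Delta'})^\star \cap S_d \subseteq (\Gamt{\Delta})^\star \cap S_d\,.
\]
Under the assumption that $\Delta$ is empty the right-hand side equals $\{\zero, e_1, \ldots, e_d\}$, forcing equality throughout. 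Thus $(\Gamt{\Delta'})^\star \cap S_d = \{\zero, e_1, \ldots, e_d\}$, which by the first paragraph means that $\Delta'$ is empty.

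There is no genuine obstacle in this argument; the only subtle conceptual point is the preliminary translation of emptiness into a property of $A^{-1}\Z^d$ rather than of $A\Z^d$, which is what makes polarity the natural tool here. Enlarging the row lattice $\Gamt{\Delta}$ toward $\Z^d$ shrinks its polar down to $\Z^d$, and this monotonicity is exactly what controls the lattice points inside the fixed region $S_d$.
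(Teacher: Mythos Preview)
Your argument is correct and follows essentially the same route as the paper: both proofs identify emptiness of $\Delta = A S_d$ with the condition $A^{-1}\Z^d \cap S_d = \{\zero, e_1,\ldots,e_d\}$, recognize $A^{-1}\Z^d$ as the polar lattice $(\Gamt{\Delta})^\star$, and use that polarity reverses the inclusion $\Gamt{\Delta} \subseteq \Gamt{\Delta'}$ to obtain $B^{-1}\Z^d \subseteq A^{-1}\Z^d$. The only cosmetic difference is that the paper argues by contraposition (a non-vertex lattice point of $\Delta'$ is pushed to one of $\Delta$), whereas you phrase it as a direct sandwich inclusion; the underlying content is identical.
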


\begin{proof}
Let $\Delta = A S_d$ and $\Delta' = B S_d$.
By assumption on the lattices we have $A^\intercal \Z^d \subseteq B^\intercal \Z^d$.
For the polar lattices we thus have the reverse inclusion and get $B^{-1} \Z^d \subseteq A^{-1} \Z^d$.

Assume, that $\Delta'$ is non-empty. 
Then, for a non-vertex $w \in \Delta' \cap \Z^d$ of~$\Delta'$, the point $B^{-1}w\in S_d \cap B^{-1} \Z^d$ is not a vertex of $S_d$.
By the lattice inclusion from above, this implies that $B^{-1}w \in A^{-1}\Z^d$, and consequently the lattice point $AB^{-1} w \in A S_d \cap \Z^d$ is not a vertex of~$\Delta = A S_d$.
In other words, the simplex~$\Delta$ is non-empty, a contradiction.
\end{proof}

Next we reduce the problem to the study of empty lattice simplices whose quotient group is a power of a group of prime order.

\begin{lemma}
\label{lem:good-quotient}
Let $\Delta \subseteq \R^d$ be an empty lattice simplex with cyclicity rank $r = \crk(\Delta)$.
Then, there is a prime $p \in \N$ and an empty lattice simplex $\Delta' \subseteq \R^d$ such that $G_{\Delta'} \cong (\Z_p)^r$.
\end{lemma}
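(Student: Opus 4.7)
The plan is to use Lemma~\ref{lem:intermediate-empty} as a ``lattice-enlargement'' tool: I will find an intermediate sublattice $\Gamma^\intercal_\Delta \subseteq \Gamma' \subseteq \Z^d$ whose quotient is isomorphic to $(\Z_p)^r$ for some prime $p$, realise it as $\Gamma' = \Gamma^\intercal_{\Delta'}$ for a lattice simplex~$\Delta'$, and then transfer emptiness from $\Delta$ to $\Delta'$.

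First I would use Proposition~\ref{prop:A-vs-A-transpose-quotients} together with~\eqref{eqn:quotient-smith-product} to write
$$
G^\intercal_\Delta \;=\; \Z^d/\Gamma^\intercal_\Delta \;\cong\; \Z_{m_r} \times \Z_{m_{r-1}} \times \cdots \times \Z_{m_1},
$$
where $m_r \mid m_{r-1} \mid \cdots \mid m_1$ and each $m_i > 1$. I then fix any prime $p$ dividing the smallest elementary divisor~$m_r$; the divisibility chain forces $p$ to divide every $m_i$, so coordinate-wise reduction modulo~$p$ yields a surjective homomorphism $G^\intercal_\Delta \twoheadrightarrow (\Z_p)^r$. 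Composing with the canonical quotient map $\Z^d \twoheadrightarrow G^\intercal_\Delta$ produces a surjection $\varphi \colon \Z^d \twoheadrightarrow (\Z_p)^r$, and I set $\Gamma' := \ker(\varphi)$. By construction, $\Gamma^\intercal_\Delta \subseteq \Gamma' \subseteq \Z^d$ and $\Z^d/\Gamma' \cong (\Z_p)^r$, so $\Gamma'$ has full rank in~$\Z^d$.

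Next, I pick any basis $B \in \Z^{d \times d}$ of the lattice $\Gamma'$ and define $\Delta' := B^\intercal S_d$. Then $\Gamma^\intercal_{\Delta'} = (B^\intercal)^\intercal \Z^d = B\Z^d = \Gamma'$, so Lemma~\ref{lem:intermediate-empty} applied to the inclusion $\Gamma^\intercal_\Delta \subseteq \Gamma^\intercal_{\Delta'}$ shows that~$\Delta'$ is empty. A final appeal to Proposition~\ref{prop:A-vs-A-transpose-quotients} yields $G_{\Delta'} \cong G^\intercal_{\Delta'} = \Z^d/\Gamma' \cong (\Z_p)^r$, as required.

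The argument is essentially pure group theory once Lemma~\ref{lem:intermediate-empty} is in hand, and I do not anticipate any real obstacle. The one point that must not be bungled is the choice of~$p$: it has to divide the \emph{smallest} elementary divisor $m_r$, since divisibility in the Smith chain only flows downward. Picking a prime dividing, say, $m_1$ but not $m_r$ would leave the reduction map undefined on some factor, and would produce a quotient of cyclicity rank strictly smaller than~$r$, spoiling the conclusion.
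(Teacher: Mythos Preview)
Your proposal is correct and follows essentially the same route as the paper: choose a prime~$p$ dividing the smallest nontrivial elementary divisor~$m_r$, pass to the intermediate sublattice whose quotient is $(\Z_p)^r$, and invoke \cref{lem:intermediate-empty} together with \cref{prop:A-vs-A-transpose-quotients}. Your presentation is slightly more explicit (building the surjection~$\varphi$ and taking its kernel, then writing $\Delta' = B^\intercal S_d$ for a chosen basis~$B$), but the argument is identical in substance.
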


\begin{proof}
Let $\Delta = A S_d$ for some integer matrix $A$ with elementary divisors $m_1,\ldots,m_d$ that satisfy $m_d = \ldots = m_{r+1} = 1 < m_r \leq \ldots \leq m_1$.
Then, in view of~\eqref{eqn:quotient-smith-product} and \cref{prop:A-vs-A-transpose-quotients}, we have $\qGt{\Delta} \cong \qG{\Delta} \cong \Z_{m_r} \times \ldots \times \Z_{m_1}$.
Now, let $p$ be a prime dividing $m_r$, and thus dividing~$m_i$, for every $i \leq r$.
Since $\Z_p$ is isomorphic to the quotient $(\Z / m_i \Z)/(p \Z / m_i \Z)$ of~$\Z_{m_i}$, for $i \leq r$, we get that $H := (\Z_p)^r$ is isomorphic to a quotient of $\qGt{\Delta} = \Z^d / \,\Gamt{\Delta}$.
Therefore, there exists a sublattice $\Lambda_H \subseteq \Z^d$ with $\Gamt{\Delta} \subseteq \Lambda_H \subseteq \Z^d$ and $H \cong \Z^d / \Lambda_H$.
By virtue of \cref{lem:intermediate-empty}, every lattice simplex $\Delta' \subseteq \R^d$ such that $\Lambda_H = \Gamt{\Delta'}$ is an empty lattice simplex satisfying $\qG{\Delta'} \cong (\Z_p)^r$.
\end{proof}

Given a prime $p$, we call any (possibly non-empty) lattice simplex $\Delta$ with $G_\Delta \cong (\Z_p)^r$ a \emph{$p$-power simplex} for brevity.
The name is chosen to reflect that the normalized volume of such a simplex is given by $\Vol(\Delta) = \card{G_\Delta} = p^r$, and thus is a power of~$p$.

\subsection{The Hermite normal form of \texorpdfstring{$p$}{p}-power simplices}

We now aim to investigate $p$-power simplices more closely, and work towards identifying a representative in their unimodular equivalence class that allows to draw more information regarding their cyclicity rank.
To this end, we need a well-known result on manipulating integer matrices:

\begin{theorem}[{cf.~\cite[Thm.~1.2]{domichkannantrotter1987hermite}}]
\label{thm:HNF}
Let $A \in \Z^{d \times d}$ be an invertible integer matrix.
Then, there exists a unimodular matrix $U \in \Z^{d \times d}$ such that the matrix $H = UA = (h_{ij}) \in \Z^{d \times d}$ is upper triangular with positive diagonal entries, and its other entries satisfy
\[
0 \leq h_{ij} <  h_{jj} \quad \textrm{ for every } \quad 1 \leq i < j \leq d \,.
\]
\end{theorem}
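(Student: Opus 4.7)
The plan is to construct $U$ explicitly as a product of elementary unimodular row operations, in two phases: (i) bring $A$ into upper-triangular form with positive diagonal entries, and (ii) reduce the entries above the diagonal into the required range. I would argue by induction on $d$, the base case $d = 1$ being $A = (a)$ with $a \neq 0$ and $U = (\operatorname{sgn}(a))$.

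For phase (i), I first transform the leftmost column of $A$ into $(g, 0, \ldots, 0)^\intercal$ with $g > 0$ using Bezout operations. Given entries $a = a_{11}$ and $b = a_{i1}$ in column~$1$ and $g = \gcd(a, b) = xa + yb$ with $x, y \in \Z$, the matrix
\[
\begin{pmatrix} x & y \\ -b/g & a/g \end{pmatrix}
\]
is unimodular (its determinant equals $(xa+yb)/g = 1$) and sends $(a, b)^\intercal$ to $(g, 0)^\intercal$. Placing it in the rows $1$ and $i$ of the $d \times d$ identity (the remaining diagonal entries being $1$) and iterating over $i = 2, 3, \ldots, d$ produces a unimodular row transformation whose effect on column $1$ is to clear all subdiagonal entries, each Bezout step leaving the previously zeroed entries in column $1$ untouched. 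The resulting top-left entry equals $\gcd(a_{11}, a_{21}, \ldots, a_{d1})$, which is positive because column~$1$ of~$A$ is nonzero. Applying the inductive hypothesis to the lower-right $(d-1) \times (d-1)$ block (lifted to $d \times d$ by a leading $1$ that acts trivially on the first row and column) then yields the full upper-triangular form with positive diagonal.

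For phase (ii), I process columns $j = 2, 3, \ldots, d$ in order: for each $i < j$, replace row~$i$ by row~$i$ minus $\lfloor h_{ij}/h_{jj}\rfloor$ times row~$j$. This unimodular operation forces the new $(i,j)$-entry into $[0, h_{jj})$ and, because row~$j$ has zero entries in columns $1, \ldots, j-1$, leaves all entries of row~$i$ in those columns unchanged, thereby preserving both the triangular structure and the off-diagonal reductions already achieved in earlier columns. Composing all operations from both phases produces the required~$U$ with $UA = H$ in the claimed form. The sole delicate point is precisely this preservation of earlier reductions, which dictates the order in which the operations must be applied; there is no deeper obstacle.
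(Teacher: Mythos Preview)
The paper does not supply a proof of this statement; it is quoted as a known result with a citation to the literature. Your argument is the standard constructive proof of the Hermite normal form and is correct. One small technicality in phase~(i): the $2\times 2$ Bezout matrix $\begin{pmatrix} x & y \\ -b/g & a/g \end{pmatrix}$ is only well-defined when $g=\gcd(a,b)\neq 0$; if both the current pivot entry and the entry $a_{i1}$ below it vanish, you should simply skip that step (apply the identity), which is harmless since the first column of the invertible matrix~$A$ is nonzero and some Bezout step will eventually produce a positive pivot.
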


The matrix~$H$ in \cref{thm:HNF} is uniquely determined and is called the \emph{Hermite normal form} of~$A$.
Hence, we can assume that we consider $p$-power simplices of the form $\Delta = \conv\{\zero,h_1,\ldots,h_d\}$, where the vertices $h_1,\ldots,h_d$ constitute the columns of a matrix $H = (h_1,\ldots,h_d)$ in Hermite normal form.
Furthermore, if a diagonal entry $h_{jj} = 1$, for some $1 \leq j \leq d$, then all other entries in its column need to be zero and all columns with index $i < j$ have only zero entries below the $j$-th row.
This implies, that with a possible change of rows and columns, we can assume
without loss of generality that the diagonal of~$H$ starts with a sequence of ones and continues with a sequence of entries strictly larger than one.
In particular, this means that the first, say~$k$, vertices of~$\Delta$ are the coordinate unit vectors in~$\Z^d$.
In summary, from here on out we assume that a $p$-power simplex~$\Delta$ is represented by a matrix in Hermite normal form with the shape
\begin{align}
H = \begin{pmatrix}
E_{d-k} & B \\
 0  & C
\end{pmatrix} \,,\label{eqn:HNF-shape-simplex}
\end{align}
for some $k \in \{0,1,\ldots,d\}$ and some matrices $B \in \Z^{(d-k) \times k}$ and $C \in \Z^{k \times k}$, where~$E_\ell$ denotes the $\ell \times \ell$ identity matrix.
An immediate but important observation is the following:

\begin{proposition}
\label{prop:triangle-shaped-quotient}
If $H \in \Z^{d \times d}$ has the form~\eqref{eqn:HNF-shape-simplex}, then
\[
\Z^d / H \Z^d \cong \Z^k / C \Z^k \,.
\]
\end{proposition}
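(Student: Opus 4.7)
The plan is to use the freedom of performing unimodular column operations on $H$ to reduce it to a block-diagonal matrix, and then read off the quotient group from the resulting block structure.

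The key observation is that for any unimodular $V \in \GL_d(\Z)$ one has $HV\Z^d = H\Z^d$, since $V\Z^d = \Z^d$. Hence integer column operations on $H$ preserve the column lattice $H\Z^d$, and therefore also the quotient $\Z^d/H\Z^d$. Because the upper-left block of $H$ is the identity $E_{d-k}$, I can clear the block $B$ by subtracting, for each of the last $k$ columns, an appropriate integer linear combination of the first $d-k$ columns. This sequence of unimodular column operations turns $H$ into the block-diagonal matrix
\[
H' = \begin{pmatrix} E_{d-k} & 0 \\ 0 & C \end{pmatrix},
\]
and we still have $H'\Z^d = H\Z^d$.

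Under the natural identification $\Z^d = \Z^{d-k} \times \Z^k$, the lattice $H'\Z^d$ decomposes as the product $\Z^{d-k} \times C\Z^k$. Consequently
\[
\Z^d/H\Z^d \;=\; \Z^d/H'\Z^d \;\cong\; (\Z^{d-k}/\Z^{d-k}) \times (\Z^k/C\Z^k) \;\cong\; \Z^k/C\Z^k,
\]
which is the claimed isomorphism. There is no real obstacle here: once the column-operation reduction is in place, the conclusion is immediate from the fact that a direct-product lattice modulo a product sublattice splits as a product of the individual quotients. The only thing to be a little careful about is to use \emph{column} operations (right multiplication by a unimodular matrix) rather than row operations, so that the lattice $H\Z^d \subseteq \Z^d$ itself is unchanged and not merely replaced by an isomorphic copy.
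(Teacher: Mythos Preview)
Your argument is correct. The paper itself does not supply a proof of this proposition; it is stated there as ``an immediate but important observation'' and left without justification. Your column-operation reduction to the block-diagonal form $\begin{pmatrix} E_{d-k} & 0 \\ 0 & C \end{pmatrix}$ is exactly the standard way to make this observation precise, and your caution about using column rather than row operations (so that $H\Z^d$ itself, and not merely its isomorphism class, is preserved) is well placed.
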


\noindent We first investigate how the cyclicity rank of~$\Delta$ dictates the shape of the matrix~$C$ in~\eqref{eqn:HNF-shape-simplex}.
The following is the main auxiliary observation:

\begin{lemma}
\label{lem:p-power-cycrank-characterization}
Let $\Delta \subseteq \R^d$ be a $p$-power simplex in the form~\eqref{eqn:HNF-shape-simplex}.
Then, the following statements are equivalent:
\begin{enumerate}[label={(\roman*)}]
 \item $\crk(\Delta) = r$.
 \item The diagonal entries of~$H$ satisfy
 \[
 h_{1,1} = \ldots = h_{d-r,d-r} = 1 \quad \textrm{ and } \quad h_{d-r+1,d-r+1} = \ldots = h_{d,d} = p \,,
 \]
 and it holds
 \[
 \zangle{ e_{d-r+1} + H \Z^d,\ldots,e_d + H \Z^d } \cong (\Z_p)^r \,. 
 \]  
\end{enumerate}
\end{lemma}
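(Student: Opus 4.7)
The plan is to invoke \cref{prop:triangle-shaped-quotient}: with $H$ in the block form~\eqref{eqn:HNF-shape-simplex}, where $C=(c_{ij})\in\Z^{k\times k}$ is upper triangular with each $c_{jj}>1$, we have $\qG{\Delta}\cong \Z^k/C\Z^k$. Since $\det(H)=\det(C)=\card{\qG{\Delta}}>0$ and $\Delta$ is a $p$-power simplex, every $c_{jj}$ is a positive power of $p$, and their product equals $p^{\crk(\Delta)}$.

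For the direction (i)$\Rightarrow$(ii), assume $\qG{\Delta}\cong (\Z_p)^r$, so every element has order dividing~$p$. Then for each fixed $j\in[k]$ the membership $p\,e_j \in C\Z^k$ produces some $z\in\Z^k$ with $Cz = p\,e_j$. Exploiting upper triangularity, I back-substitute from the bottom: the rows $i>j$ read $c_{ii}\,z_i + \sum_{\ell>i} c_{i\ell}\,z_\ell = 0$ and successively force $z_k = \dots = z_{j+1} = 0$; the $j$-th row then reads $c_{jj}\,z_j = p$, so $c_{jj}\mid p$, and as $c_{jj}$ is a power of $p$ at least $p$, it equals~$p$. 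Applying this for every $j\in[k]$ shows that the whole diagonal of $C$ consists of $p$'s, whence $p^k = p^r$ yields $k=r$ and the diagonal structure required by (ii). The generator claim then follows by noting that the isomorphism $\qG{\Delta}\to \Z^r/C\Z^r$ induced by projecting onto the last $r$ coordinates sends $e_{d-r+i}+H\Z^d$ to $e_i + C\Z^r$ (a direct check using the block form of $H$); since $e_1,\dots,e_r$ generate $\Z^r$, their images generate all of $\Z^r/C\Z^r \cong (\Z_p)^r$.

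For the converse (ii)$\Rightarrow$(i), the diagonal condition gives $\card{\qG{\Delta}} = \det(H) = p^r$, while the second condition exhibits a subgroup of $\qG{\Delta}$ already isomorphic to $(\Z_p)^r$ and thus of order $p^r$. This subgroup must therefore coincide with $\qG{\Delta}$, so $\qG{\Delta}\cong (\Z_p)^r$ and $\crk(\Delta)=r$ follows by definition of the cyclicity rank.

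I expect the back-substitution step in (i)$\Rightarrow$(ii) to be the only delicate point: one must verify that, despite potentially nonzero strictly upper triangular entries $c_{i\ell}$ in $C$, the vanishing of $p\,e_j$ in every coordinate below $j$ really does isolate the divisibility relation $c_{jj}\mid p$ cleanly, so that no interference from the off-diagonal entries prevents forcing each diagonal entry to be exactly~$p$ rather than a higher power of~$p$.
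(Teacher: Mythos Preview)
Your proof is correct. Both directions are handled soundly; in particular, your back-substitution argument is clean: since $C$ is upper triangular with $c_{ii}>1$, solving $Cz=pe_j$ from the bottom indeed forces $z_k=\dots=z_{j+1}=0$ and then $c_{jj}z_j=p$, so the off-diagonal entries cause no interference.

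The route differs from the paper's in the core step of (i)$\Rightarrow$(ii). The paper argues group-theoretically: for each index it considers the short exact sequence
\[
0 \to \zangle{\bar e_k}\cap\zangle{\bar e_1,\dots,\bar e_{k-1}} \hookrightarrow \zangle{\bar e_k} \twoheadrightarrow \zangle{\bar e_1,\dots,\bar e_k}/\zangle{\bar e_1,\dots,\bar e_{k-1}} \to 0,
\]
notes that the rightmost quotient is $\Z_{h_{k,k}}$ by upper triangularity, and that its order divides the order of the cyclic subgroup $\zangle{\bar e_k}\subseteq(\Z_p)^r$, which lies in $\{1,p\}$. You instead work linearly with the matrix $C$ via \cref{prop:triangle-shaped-quotient} and extract $c_{jj}\mid p$ by explicit back-substitution. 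Your argument is more elementary and self-contained (no exact sequences), while the paper's framing makes the role of the subgroup structure of $(\Z_p)^r$ more visible. The treatment of the generator claim and of (ii)$\Rightarrow$(i) is essentially the same in both proofs, with your version making the projection isomorphism $\qG{\Delta}\to\Z^r/C\Z^r$ explicit.
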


\begin{proof}
Assume first, that $\crk(\Delta) = r$, that is, $G_\Delta \cong (\Z_p)^r$.
We show that $h_{j,j} \in \{1,p\}$, for every $1 \leq j \leq d$.
To this end, denote with $\bar e_i = e_i + H \Z^d$ the residue class of~$e_i$ in $\qG{\Delta} = \Z^d / H \Z^d$, and for $k \in \{2,\ldots,d\}$ consider the short exact sequence
\[
0 \ \to \ \zangle{\bar e_k} \cap \zangle{\bar e_1,\ldots,\bar e_{k-1}} \ \hookrightarrow \ \zangle{\bar e_k} \ \twoheadrightarrow \ \zangle{\bar e_1,\ldots,\bar e_k} / \zangle{\bar e_1,\ldots,\bar e_{k-1}} \ \to \ 0 \,.
\]
It holds that $\zangle{\bar e_1} \cong \Z_{h_{1,1}}$ and $\zangle{\bar e_1,\ldots,\bar e_k} / \zangle{\bar e_1,\ldots,\bar e_{k-1}} \cong \Z_{h_{k,k}}$, for $k \geq 2$, because~$H$ is upper triangular.
By the exactness of said sequence, the order of $\zangle{\bar e_1,\ldots,\bar e_k} / \zangle{\bar e_1,\ldots,\bar e_{k-1}}$ divides the order of $\zangle{\bar e_k}$.
The only cyclic subgroups of $(\Z_p)^r$ are $\{0\}$ and $\Z_p$, so that indeed we get $h_{j,j} \in \{1,p\}$, for every $1 \leq j \leq d$.

Now, in view of $h_{1,1}\cdot\ldots\cdot h_{d,d} = \det(H) = \card{G_\Delta} = p^r$, we obtain the claim on the diagonal elements in~(ii).
Moreover, we can view $\Z^d / H \Z^d = \zangle{ e_{d-r+1} + H \Z^d,\ldots,e_d + H \Z^d } \cong (\Z_p)^r$ as an $r$-dimensional vector space over the field with~$p$ elements, with basis given by $\left\{ e_{d-r+j} + H \Z^d : 1 \leq j \leq r \right\}$.

Conversely, assume that condition~(ii) holds.
Then, we have that $G_\Delta = \Z^d / H \Z^d = \zangle{ e_{d-r+1} + H \Z^d,\ldots,e_d + H \Z^d } \cong (\Z_p)^r$, and thus $\crk(\Delta) = r$ as claimed in~(i).
\end{proof}

The following example demonstrates the necessity for the elements $e_{\ell} + H \Z^{d}$, for $\ell = d-r+1,\ldots,d$, from the proof above to be independent in $\Z^{d} / H \Z^{d}$. 
It also shows that $\Z^{d} / H \Z^{d}$ may not be isomorphic to $ (\Z_{p})^{r}$ if they are not.

\begin{example}
Let
\[
H =
\left(\begin{array}{cc|cc}
1 & 0 & 1 & 0\\
0 & 1 & 1 & 0\\ \hline
0 & 0 & p & 1\\
0 & 0 & 0 & p
\end{array}\right) \,.
\]
Then, we get
\[
\Z^{4} / H \Z^{4} = \zangle{ e_3 + H \Z^4,e_4 + H \Z^4 }.
\]
But this matrix (respectively the $p$-power simplex spanned by its columns) does not satisfy the condition in \cref{lem:p-power-cycrank-characterization}~(ii), because by the last column of~$H$ we have $e_3 + p \, e_4 \in H\Z^4$, and thus $e_{3} \in -p \, e_{4} + H \Z^{4}$.
Hence, the quotient group $\Z^{4} / H \Z^{4}$ is cyclic with $\Z^{4} / H \Z^{4}= \zangle{ e_4 + H \Z^4 }\cong \Z_{p^{2}}$.
\end{example}

\begin{lemma}
\label{lem:p-power-diagonal-C-matrix}
Let $\Delta = H S_d \subseteq \R^d$ be a $p$-power simplex with $H \in \Z^{d\times d}$ in Hermite normal form in the shape~\eqref{eqn:HNF-shape-simplex}, that is, 
\[
H =
\begin{pmatrix}
E_{d-r} & B \\
      0 & C
\end{pmatrix} \,.
\] 
Then, $\crk(\Delta) = r$ if and only if $C = p \, E_{r}$.
\end{lemma}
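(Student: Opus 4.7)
The plan is to combine \cref{lem:p-power-cycrank-characterization} (which already pins down the diagonal of $H$) with \cref{prop:triangle-shaped-quotient} (which reduces the group computation to $C$) and then invoke the uniqueness of the Hermite normal form to force $C = p\, E_r$ on the nose.

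For the forward direction, I would assume $\crk(\Delta) = r$ and first observe that by \cref{lem:p-power-cycrank-characterization}\,(ii) the diagonal of $H$ is $(1,\ldots,1,p,\ldots,p)$ with exactly $d-r$ ones and $r$ copies of $p$. Because the $(d-r+i, d-r+j)$-entry of $H$ coincides with the $(i,j)$-entry of $C$, the block $C$ inherits from $H$ the property of being upper triangular with positive diagonal and $0 \leq c_{ij} < c_{jj} = p$ for $i < j$; in other words, $C$ is itself in Hermite normal form and has diagonal $(p, \ldots, p)$.

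The main step is then to argue that the off-diagonal entries of $C$ vanish. By \cref{prop:triangle-shaped-quotient} (with $k = r$) we have $\Z^r / C\Z^r \cong G_\Delta \cong (\Z_p)^r$. Every element of $(\Z_p)^r$ has order dividing $p$, so $p\, e_i \in C\Z^r$ for each $1 \leq i \leq r$, i.e.\ $p\Z^r \subseteq C\Z^r$. On the other hand, both lattices have index $p^r$ in $\Z^r$: for $p\Z^r$ this is trivial, and for $C\Z^r$ it follows from $\det(C) = p^r$. Hence $p\Z^r = C\Z^r$. The matrix $pE_r$ is clearly in Hermite normal form and represents the lattice $p\Z^r$, and so does $C$ by the previous paragraph; by uniqueness of the Hermite normal form we conclude $C = p\, E_r$.

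The converse is immediate: if $C = p\, E_r$, then \cref{prop:triangle-shaped-quotient} yields $G_\Delta \cong \Z^r / p\Z^r \cong (\Z_p)^r$, which splits into $r$ cyclic factors and no fewer, so $\crk(\Delta) = r$. I do not foresee any real obstacle here; the only subtlety is bookkeeping that the $C$-block of a matrix in Hermite normal form is again in Hermite normal form, which makes the uniqueness argument applicable.
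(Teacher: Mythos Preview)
Your argument is correct and lands in the same place as the paper's: both reduce via \cref{prop:triangle-shaped-quotient} to $\Z^r/C\Z^r \cong (\Z_p)^r$, deduce that every entry of~$C$ is divisible by~$p$, and then let the Hermite normal form constraints ($c_{jj}=p$, $0\le c_{ij}<p$ for $i<j$) force $C=pE_r$. The paper phrases the divisibility step via linear independence of the residue classes $\bar e_i$ over the field~$\Z_p$; you phrase it as the lattice equality $C\Z^r = p\Z^r$. These are two packagings of the same observation.

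One small technicality: your closing appeal to ``uniqueness of the Hermite normal form'' does not quite match \cref{thm:HNF}. That theorem gives uniqueness under \emph{left} multiplication by a unimodular matrix (equivalently, for matrices with the same row lattice), whereas the equality $C\Z^r = p\Z^r$ you establish concerns \emph{column} lattices, so $C$ and $pE_r$ differ by \emph{right} multiplication. No real damage is done: $C\Z^r \subseteq p\Z^r$ already says outright that every entry of~$C$ is a multiple of~$p$, and together with the entry bounds inherited from~$H$ this yields $C=pE_r$ without any appeal to uniqueness.
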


\begin{proof}
Assume first, that $\crk(\Delta) = r$, that is, $\qG{\Delta} = \Z^d / H \Z^d \cong (\Z_p)^r$.
This implies that every element in~$\qG{\Delta}$ has order~$1$ or~$p$, so that~$\qG{\Delta}$ is a $\Z_p$-vector space of dimension~$r$.
In view of \cref{prop:triangle-shaped-quotient} we also have $\qG{\Delta} \cong \Z^r / C \Z^r$.
Now, for $1 \leq i \leq r$, let $\bar e_i = e_i + C \Z^r$ be the residue class of the standard unit vector~$e_i$ in~$\Z^r / C \Z^r$.
Clearly, $\bar e_1,\ldots,\bar e_r$ generate~$\Z^r / C \Z^r$ as an abelian group and thus also as a $\Z_p$-vector space, meaning that they form a basis of~$\Z^r / C \Z^r$.
Writing $C = (c_{i,j})_{1 \leq i,j \leq r}$, the $k$-th column of~$C$ gives the relation
\[
c_{1,k} \cdot \bar e_1 + \ldots + c_{r,k}  \cdot \bar e_r = 0
\]
in $\Z^r / C \Z^r$.
Hence, by the linear independence of the basis vectors~$\bar e_i$, all the entries of~$C$ equal $0$ modulo~$p$.
\cref{lem:p-power-cycrank-characterization} implies that the diagonal entries of~$C$ are all equal to~$p$, which together with the assumption that~$H$ is in Hermite normal form implies that $C = p \, E_r$, as desired.

%
%

The converse is direct from \cref{prop:triangle-shaped-quotient}, as $\qG{\Delta} = \Z^d / H \Z^d \cong \Z^r / (p E_r \Z^d) \cong (\Z_p)^r$.
This means, we have $\crk(\Delta) = r$ as desired.
\end{proof}

Note that \cref{lem:p-power-diagonal-C-matrix} holds for every $p$-power simplex, independently of it being empty.
A direct consequence is that every $p$-power simplex in~$\R^d$ with maximal cyclicity rank~$d$, is equivalent to a dilate of the standard simplex:

\begin{corollary}
Let $p$ be a prime and let $\Delta \subseteq \R^d$ be a $p$-power simplex with $\crk(\Delta) = d$, that is, $G_\Delta \cong (\Z_p)^d$.
Then, $\Delta$ is unimodularly equivalent to $p \, S_d$.
\end{corollary}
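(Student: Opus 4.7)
The plan is to invoke \cref{lem:p-power-diagonal-C-matrix} in the extremal case $r = d$, where the block structure in the Hermite normal form of a representing matrix degenerates almost completely.

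The first step is to apply the normalisation established in the paragraph preceding~\eqref{eqn:HNF-shape-simplex}: every $p$-power simplex $\Delta \subseteq \R^d$ is unimodularly equivalent to one of the form $\Delta = H S_d$, with $H \in \Z^{d \times d}$ in Hermite normal form and of block shape
\[
H = \begin{pmatrix} E_{d-r} & B \\ 0 & C \end{pmatrix},
\]
where, as noted in the proof of \cref{lem:p-power-cycrank-characterization}, the integer $r$ coincides with $\crk(\Delta)$ (it is the number of diagonal entries of~$H$ strictly greater than one) and $C \in \Z^{r \times r}$ collects those diagonal entries together with the relevant off-diagonal ones above.

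The second step is to substitute $r = d$ into this description. The upper-left identity block $E_{d-r}$ then has size $0 \times 0$ and the block $B$ is likewise empty, so $H$ reduces to its lower-right block $C \in \Z^{d \times d}$. By \cref{lem:p-power-diagonal-C-matrix}, the condition $\crk(\Delta) = d$ forces $C = p \, E_d$, and hence $H = p \, E_d$. Substituting back gives $\Delta = H S_d = p \, S_d$, yielding the asserted unimodular equivalence.

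There is essentially no obstacle: the corollary is a transparent specialisation of \cref{lem:p-power-diagonal-C-matrix} to the maximum possible value of~$r$, at which the block decomposition of the Hermite normal form collapses to a single $p \cdot \mathrm{Id}$ block. The only thing worth emphasising is that the initial passage to the normal-form representative is done \emph{up to} unimodular equivalence, which is precisely the strength of conclusion the statement demands.
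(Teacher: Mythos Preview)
Your proof is correct and follows essentially the same route as the paper's own argument: both reduce the statement to an application of \cref{lem:p-power-diagonal-C-matrix} with $r = d$, at which point the Hermite normal form collapses to $H = C = p\,E_d$. Your version is simply more explicit about why the identity block $E_{d-r}$ vanishes (invoking \cref{lem:p-power-cycrank-characterization} to pin down the number of diagonal ones), whereas the paper compresses this into a one-line appeal to \cref{lem:p-power-diagonal-C-matrix}.
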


\begin{proof}
Assume that $\Delta = H S_d$ is given by $H$ in Hermite normal form.
Since $\crk(\Delta) = d$, \cref{lem:p-power-diagonal-C-matrix} implies that $H = p \, E_d$, and thus $\Delta = p \, S_d$ as claimed.
\end{proof}

We finish this section by collecting a few necessary conditions for a $p$-power simplex to be empty.

\begin{lemma}
\label{lem:necessary-conds-empty-p-power}
Let $\Delta = H S_d$ be a $p$-power simplex with  
\[
H =
\begin{pmatrix}
 E_{d-r} & B \\
 0       & p\, E_{r} 
\end{pmatrix} \,,
\]
for a suitable matrix $B \in \Z^{(d-r) \times r}$.
If $\Delta$ is an empty lattice simplex, then:
\begin{enumerate}[label={(\roman*)}]
 \item Every column of $B$ has at least two non-zero entries. 
 \item No two columns of~$B$ are integral multiples of one another.
 \item Every column of~$B$ containing exactly two non-zero entries is primitive.
 \item If $r = \crke(d)$, then every row of $B$ has a non-zero entry or $\crke(d) = \crke(d-1)$.
 \item For every submatrix $B' \in \Z^{(d-r) \times t}$ of $B$ with $t \leq \min(p,r)$, there exists an index $i \in [d-r]$ such that the sum of the entries of the $i$th row of~$B'$ is not divisible by $p$. 
\end{enumerate}
\end{lemma}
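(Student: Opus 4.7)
The plan is to reformulate every part through the overlattice $H^{-1}\Z^d \supseteq \Z^d$, whose quotient is isomorphic to $G_\Delta$. Because
\[
H^{-1} = \begin{pmatrix} E_{d-r} & -B/p \\ 0 & E_r/p \end{pmatrix},
\]
the cosets of $H^{-1}\Z^d/\Z^d$ are parametrized by $w \in (\Z_p)^r$ with canonical representative $H^{-1}(0,w) = (-Bw/p,\, w/p)$, and $\Delta$ is empty iff for every non-zero $w$ the sum $\sum_{i=1}^{d-r}\{-(Bw)_i/p\} + |w|_1/p$ of the minimal non-negative translate (with $\{\cdot\}$ denoting the fractional part in $[0,1)$) strictly exceeds $1$. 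My strategy is therefore to exhibit, for each of (i), (ii), (iii), (v), a specific $w$ with this sum at most $1$, producing a non-vertex lattice point of $\Delta$ and contradicting emptiness. Part (iv) will be handled by a separate slicing argument.

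The easy cases are (i), (ii), (v). For (i), take $w = e_j$: since $b_j$ has at most one non-zero entry in $\{1,\ldots,p-1\}$ (by the Hermite bound), the sum equals either $1/p$ (if $b_j = 0$) or $(p - b_{ij} + 1)/p \leq 1$. For (v), take $w = e_{j_1} + \cdots + e_{j_t}$: by the divisibility assumption $Bw \in p\Z^{d-r}$, so all top fractional parts vanish and the sum collapses to $t/p \leq 1$. For (ii), write $c \equiv c' \pmod{p}$ with $c' \in \{0,\ldots,p-1\}$: the case $c' = 0$ forces $b_j = 0$ by the Hermite bound and reduces to (i), while for $c' \geq 1$ the choice $w = e_j - c' e_k$ gives $Bw = (c - c')b_k \in p\Z^{d-r}$, and shifting the $k$-th bottom entry up by $1$ produces a representative with entries $1/p$ and $(p - c')/p$ in the bottom block summing to $(p + 1 - c')/p \leq 1$.

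The main obstacle is (iii). Assume $b_j$ has exactly two non-zero entries $a, b \in \{1,\ldots,p-1\}$ at positions $i_1, i_2$, and set $g := \gcd(a, b) \geq 2$; since $g \leq a < p$ we automatically have $\gcd(g, p) = 1$. The plan is to restrict attention to the three-dimensional coordinate subspace $\lin\{e_{i_1}, e_{i_2}, e_{d-r+j}\}$, reducing (iii) to showing that $\conv\{0, e_1, e_2, (a, b, p)\} \subseteq \R^3$ is not an empty lattice simplex. I would then split on the sign of $a + b - p$: for $a + b \geq p + 1$ the lattice point $(1, 1, 1)$ lies in the simplex (equivalently, $w = e_j$ works, with sum $2 - (a + b - 1)/p \leq 1$); for $a + b \leq p$ the point $(a/g,\, b/g,\, \lfloor p/g \rfloor)$ does (equivalently, $w = \lfloor p/g \rfloor \cdot e_j$ works), where the check reduces to verifying $\rho(a + b - 1) \leq (g - 1) p$ with $\rho := p \bmod g$, which holds because $\rho \leq g - 1$ and $a + b - 1 \leq p - 1$. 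The difficulty is that I cannot see a single $w$ handling both regimes uniformly, so this case split appears essential.

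Part (iv) uses a descent argument. If some row $i \in [d - r]$ of $B$ is identically zero, then the $i$-th row of $H$ equals $e_i^\intercal$, so among the vertices of $\Delta$ only $e_i$ itself has non-zero $i$-th coordinate. Slicing $\Delta$ by the hyperplane $\{x_i = 0\}$ and projecting out the $i$-th coordinate yields a $(d-1)$-dimensional $p$-power simplex $\Delta'$ whose Hermite form is obtained by deleting the $i$-th row and column of $H$; in particular $\crk(\Delta') = r$. Lattice points of $\Delta'$ lift to lattice points of $\Delta$ with $x_i = 0$, which can only be vertices since $\Delta$ is empty, so $\Delta'$ is empty as well. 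This gives $\crke(d - 1) \geq r = \crke(d)$, and combined with the monotonicity~\eqref{eqn:monotonicity-crks} yields the equality $\crke(d) = \crke(d - 1)$.
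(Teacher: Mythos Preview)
Your proof is correct. For parts (i), (ii), (iv), (v) your argument is essentially the paper's --- the same explicit non-vertex points (respectively, the same facet descent for (iv)), just packaged through the fractional-part criterion for $H^{-1}\Z^d \cap S_d$ rather than written as $H\lambda \in \Z^d$.

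The genuine difference is in (iii). The paper reduces to the same three-dimensional face $T = \conv\{\zero,e_1,e_2,(a,b,p)^\intercal\}$ but then invokes White's classification of empty lattice tetrahedra (\cref{thm:white}): comparing the three pairs of opposite edges of~$T$, the parallel-lattice-plane condition forces $a=1$ or $b=1$ or $a+b=p$, any of which (with $p$ prime) gives $\gcd(a,b)=1$. Your argument instead exhibits an explicit interior lattice point of~$T$ whenever $g=\gcd(a,b)\geq 2$, via the case split $(1,1,1)$ for $a+b\geq p+1$ and $(a/g,\,b/g,\,\lfloor p/g\rfloor)$ for $a+b\leq p$; the verification $\rho(a+b-1)\leq (g-1)p$ is clean and correct. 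This is more elementary and self-contained, avoiding the appeal to White. The trade-off is that the paper's route actually yields the sharper trichotomy on $(a,b)$, whereas yours only gives primitivity --- but since the lemma only asserts primitivity, nothing is lost for the present purposes.
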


\begin{proof}
Write $k = d-r$ for brevity.
We let $H = (v_1,\ldots,v_d)$, so that $\Delta = \conv\{\zero,v_1,\ldots,v_d\}$.

(i): Since $\Delta$ is empty, the vectors $v_1,\ldots,v_d$ are primitive, so in particular, $v_j \neq p \cdot e_j$, for every $j \in \{k + 1,\ldots,d\}$.
Hence, every column of~$B$ is guaranteed to contain at least one non-zero entry.
Assume that the $j$th column of~$B$ has exactly one non-zero entry, say $1 \leq b_{ij} < p$, for some $i \in [k]$.
Then, 
\[
H \left(\frac{p - b_{ij}}{p} \, e_i + \frac{1}{p} \, e_{k+j} \right)
= \frac{p - b_{ij}}{p} \, e_i + \frac{1}{p} \left( b_{ij} \, e_i + p \, e_{k+j} \right)
= e_i + e_{k+j} \in\Z^{d} \setminus \{v_1,\ldots,v_d\} \,,
\]
which means that $\Delta$ is non-empty, because $\frac{p - b_{ij}}{p} \, e_i + \frac{1}{p} \, e_{k+j} \in S_d$.

(ii): Assume to the contrary that the $i$th column $b_i$ and the $j$th column~$b_j$ of~$B$ satisfy $b_i = \mu b_j$, for some $i \neq j$ and some $\mu \in \Z$.
Since $H$ is in Hermite normal form, every entry of~$B$ lies in $\{0,1,\ldots,p-1\}$ and thus $1 \leq \mu \leq p-1$.
Then,
\begin{align*}
H \left( \frac{1}{p} \, e_{k+i} + \frac{p-\mu}{p} \, e_{k+j} \right)
&= \frac{1}{p} \, ((b_i^\intercal,0,\ldots,0)^\intercal + p \, e_{k+i}) + \frac{p-\mu}{p} \, ((b_j^\intercal,0,\ldots,0)^\intercal + p \, e_{k+j}) \\
&= (b_j^\intercal,0,\ldots,0)^\intercal + e_{k+i} + (p-\mu) \, e_{k+j} \in \Z^{d} \setminus \{v_1,\ldots,v_d\} \,.
\end{align*}
Therefore, $\Delta$ is non-empty, because $\frac{1}{p} \, e_{k+i} + \frac{p-\mu}{p} \, e_{k+j} \in S_d$.

(iii): Assume that $b_i$ is a column of~$B$ with exactly two non-zero entries, which without loss of generality, we may assume to be the first two entries.
Thus, $b_i = (r,s,0,\ldots,0)^\intercal$, for some $1 \leq r,s < p$.
Since $\Delta$ is empty, the three-dimensional face $T = \conv\{\zero,e_1,e_2,r e_1 + s e_2 + p e_{d-r+i}\}$ of~$\Delta$ must be empty as well.
By White's characterization (\cref{thm:white}), a necessary condition for~$T$ to be empty is that two of its opposite edges lie in parallel consecutive lattice planes.
A simple calculation comparing each of the three pairs of opposite edges of~$T$ shows that this happens if and only if $r=1$ or $s=1$ or $r+s = p$.
Since~$p$ is prime, this implies that $\gcd(r,s)=1$ and hence $b_i$ is primitive.

(iv): Assume that $r = \crke(d)$ and that the $i$th row of~$B$ consists only of zero entries.
Deleting the $i$th row and $i$th column of~$H$, we then obtain the matrix $\widetilde{H} \in \Z^{(d-1) \times (d-1)}$, which, in view of \cref{lem:p-power-diagonal-C-matrix}, corresponds to an empty lattice $(d-1)$-simplex with the same cyclicity rank as~$\Delta$.
Hence with $\crke(\Delta) = \crke(d)$, we get $\crke(d) \leq \crke(d-1)$, and so by the monotonicity of $d \mapsto \crke(d)$ (see~\eqref{eqn:monotonicity-crks}), we obtain equality.

(v): Assume that there exists a submatrix $B' \in \Z^{k \times t}$ of~$B$ with $t\leq \min(p,r)$ such that the sum of the entries of every row of~$B'$ is divisible by~$p$.
Without loss of generality, we may assume that~$B'$ is given by the first~$t$ columns of~$B = (b_{ij})$.
Then,
\begin{align*}
H \cdot \sum_{j=1}^{t}\frac{1}{p} \, e_{k+j}
&= \sum_{i=1}^{k}\frac{1}{p}(b_{i1} + \ldots + b_{it}) \, e_i + \sum_{j=1}^{t}p\cdot \frac{1}{p} \, e_{k+j} \\
&= \sum_{i=1}^{k} \frac{b_{i1} + \ldots + b_{it}}{p} \, e_i + \sum_{j=1}^{t}e_{k+j} \in \Z^{d} \setminus \{v_1,\ldots,v_d\} \,,
\end{align*}
and thus~$\Delta$ is non-empty.
\end{proof}

\begin{remark}
\label{rem:necessary-conditions}
The conditions (ii) and (iii) in \cref{lem:necessary-conds-empty-p-power} are in the following sense best possible:
Condition~(ii) may be phrased as saying that any two columns of~$B$ are linearly independent.
There exist, however, empty $p$-power simplices whose matrix~$B$ contains three linearly dependent columns.
For a concrete example we refer to the first three columns of the defining matrix of the empty $3$-power simplex in \cref{prop:crk-3-d8}.

Likewise, condition~(iii) does not extend to columns of~$B$ with at least three non-zero entries.
Indeed, the $3$-power simplex $\conv\left\{\zero,e_1,e_2,e_3,(2,2,2,3)^\intercal\right\} \subseteq \R^4$ is empty.
\end{remark}

\section{Small Dimensions and the Case \texorpdfstring{$p=2$}{p=2}}
\label{sect:p=2}

In this section, we determine the constants $\crke(d)$ for dimensions $d \leq 7$.
This is based on understanding the monotonicity of the function $d \mapsto \crke(d)$, as well as the maximal cyclicity rank of an empty $2$-power simplex.

\begin{lemma}
\label{lem:facets-of-p-simplex}
Let $\Delta = H S_d = \conv\{\zero,v_1,\ldots,v_d\}$ be a $p$-power simplex with cyclicity rank $r = \crk(\Delta)$ and with
\[
H =
\begin{pmatrix}
E_{d-r} & B \\
      0 & p \, E_{r}
\end{pmatrix} \,,
\] 
for a suitable matrix $B \in \Z^{(d-r) \times r}$.
Furthermore, for every $i \in [d]$, we consider the facet $\Delta_i := \conv\{\zero,v_1,\ldots,v_{i-1},v_{i+1},\ldots,v_d\}$ of~$\Delta$, and its quotient group $G_{\Delta_i}$ defined with respect to the lattice $\Z^d \cap \lin(\Delta_i)$.
Then, we have
\[
G_{\Delta_j} \cong (\Z_p)^{r-1} \quad \textrm{ for every } \quad j \in \{d-r+1,\ldots,d\} \,.
\]
\end{lemma}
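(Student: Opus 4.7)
The plan is to exploit the very clean block structure of the Hermite normal form matrix~$H$ together with the fact that for $j \in \{d-r+1,\ldots,d\}$ the $j$-th row of~$H$ equals $p\, e_j^\intercal$. This forces the facet $\Delta_j$ to lie in a coordinate hyperplane, which in turn reduces the quotient computation to a smaller matrix of the same block shape.

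More concretely, let me write $k = d-r$ and, for a chosen $j = k+s$ with $s \in \{1,\ldots,r\}$, inspect the edges of $\Delta_j$ emanating from~$\zero$, namely the vectors $e_1,\ldots,e_k$ together with $h_{k+1},\ldots,h_{k+s-1},h_{k+s+1},\ldots,h_d$. The first key step is to observe that the $j$-th coordinate of each of these vectors vanishes: for $e_1,\ldots,e_k$ this is immediate since $j > k$, and for $h_{k+i}$ with $i \neq s$ the $j$-th entry comes from the off-diagonal position $(k+s, k+i)$ of the block~$p\, E_r$, which is zero. Since we have $d-1$ linearly independent such vectors, this shows $\lin(\Delta_j) = \{x \in \R^d : x_j = 0\}$, and hence $\Z^d \cap \lin(\Delta_j)$ is canonically identified with~$\Z^{d-1}$ via the projection that drops the $j$-th coordinate.

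Next, I would identify $G_{\Delta_j}$ under this identification. The edge matrix of~$\Delta_j$ obtained by deleting the $j$-th column of~$H$ and then forgetting the (identically zero) $j$-th coordinate is
\[
H' = \begin{pmatrix} E_k & B' \\ 0 & p\, E_{r-1} \end{pmatrix} \,,
\]
where $B' \in \Z^{k \times (r-1)}$ is obtained from~$B$ by removing its $s$-th column. Applying \cref{prop:triangle-shaped-quotient} directly yields
\[
G_{\Delta_j} \cong \Z^{d-1} / H' \Z^{d-1} \cong \Z^{r-1} / (p\, E_{r-1}) \Z^{r-1} \cong (\Z_p)^{r-1} \,,
\]
as desired.

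There is no real obstacle here; the only point that requires care is to verify that the ambient lattice $\Z^d \cap \lin(\Delta_j)$ is precisely spanned by the $d-1$ coordinate vectors $e_i$ with $i \neq j$, so that the identification with~$\Z^{d-1}$ is canonical and the edge sublattice corresponds exactly to the column lattice of~$H'$. This is guaranteed by the specific coordinate-hyperplane form $\{x_j = 0\}$ established in the first step, which is the whole point of having restricted to $j \in \{k+1,\ldots,d\}$: the analogous statement would fail for $j \in \{1,\ldots,k\}$, since then~$\lin(\Delta_j)$ is generally not a coordinate hyperplane.
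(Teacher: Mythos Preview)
Your proof is correct and follows essentially the same approach as the paper: both arguments observe that for $j \in \{d-r+1,\ldots,d\}$ the facet $\Delta_j$ lies in the coordinate hyperplane $\{x_j = 0\}$, delete the $j$-th row and column of~$H$ to obtain a matrix of the same block shape with $p\,E_{r-1}$ in the lower right, and then read off the quotient. The only cosmetic difference is that you invoke \cref{prop:triangle-shaped-quotient} directly to compute $\Z^{d-1}/H'\Z^{d-1}$, whereas the paper phrases the final step via \cref{lem:p-power-diagonal-C-matrix}; these are equivalent here.
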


\begin{proof}
For $j \in \{d-r+1,\ldots,d\}$, we let $B(j) \in \Z^{(d-r) \times (r-1)}$ be the matrix obtained after deleting the column with index $j-(d-r)$ in~$B$, and let $H(j) \in \Z^{(d-1) \times (d-1)}$ be the matrix obtained after deleting the $j$th row and the $j$th column in~$H$.
Then, clearly
\[
H(j) =
\begin{pmatrix}
E_{d-r} & B(j) \\
      0 & p \, E_{r-1}
\end{pmatrix} \,,
\]
and $G_{\Delta_j} \cong \Z^{d-1} / \left(H(j) \, \Z^{d-1}\right)$, because the $(d-1)$-simplex $\bar \Delta_j := H(j) S_{d-1}$ arises from~$\Delta_j$ by deleting the $j$th coordinate.
Now, applying \cref{lem:p-power-diagonal-C-matrix} to~$\bar \Delta_j$ yields that $\crk(\Delta_j) = \crk(\bar \Delta_j) = r-1$, as desired.
\end{proof}

By this lemma we can now prove that the function $d \mapsto \crke(d)$ grows at most by one with each dimension.

\begin{theorem}
\label{thm:crk-empty-jumps}
For every $d \geq 2$ holds
\[
\crke(d) \leq \crke(d-1) + 1 \,.
\]
\end{theorem}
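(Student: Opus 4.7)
\textbf{Overall strategy.} I would descend from a maximal empty example in dimension~$d$ to an empty example in dimension~$d-1$ whose cyclicity rank is exactly one less, using the facet construction from \cref{lem:facets-of-p-simplex}. The key insight is that facets of empty lattice simplices are themselves empty lattice simplices in their own affine hull.

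\textbf{Step 1: Reduction to a convenient normal form.} Assume $r := \crke(d) \geq 1$, otherwise the inequality is trivial. Pick an empty lattice $d$-simplex $\Delta$ with $\crk(\Delta) = r$. By \cref{lem:good-quotient}, there is a prime $p$ and an empty lattice $d$-simplex (which I relabel $\Delta$) with $G_\Delta \cong (\Z_p)^r$. By \cref{lem:p-power-diagonal-C-matrix} applied to the Hermite normal form, I may assume
\[
\Delta = H S_d, \qquad H = \begin{pmatrix} E_{d-r} & B \\ 0 & p\, E_r \end{pmatrix},
\]
for some $B \in \Z^{(d-r) \times r}$.

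\textbf{Step 2: Choose and analyse a facet.} Fix any $j \in \{d-r+1,\ldots,d\}$ and consider the facet $\Delta_j = \conv\{\zero, v_1,\ldots,v_{j-1},v_{j+1},\ldots,v_d\}$, which contains the origin as a vertex. \cref{lem:facets-of-p-simplex} gives $G_{\Delta_j} \cong (\Z_p)^{r-1}$, so $\crk(\Delta_j) = r-1$.

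\textbf{Step 3: The facet is an empty $(d-1)$-lattice simplex.} Let $\Lambda := \Z^d \cap \lin(\Delta_j)$; this is a lattice of rank $d-1$ (since $\lin(\Delta_j)$ is a rational hyperplane). Any lattice point of $\Lambda$ contained in $\Delta_j$ is, a fortiori, a lattice point of $\Z^d$ contained in $\Delta \supseteq \Delta_j$; since $\Delta$ is empty, such a point must be a vertex of $\Delta$, and being in $\Delta_j$ it must be one of the vertices of $\Delta_j$. Fixing a basis of $\Lambda$ identifies $\Lambda$ with $\Z^{d-1}$ unimodularly, turning $\Delta_j$ into an empty lattice $(d-1)$-simplex whose cyclicity rank, by \cref{prop:isomporphism-class-quotients}, is preserved. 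Hence
\[
\crke(d-1) \geq \crk(\Delta_j) = r - 1 = \crke(d) - 1,
\]
which is the claim.

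\textbf{Main obstacle.} Everything is essentially assembled from earlier lemmas; the only point that needs care is the identification in Step~3 that emptiness survives passing to the $(d-1)$-dimensional sublattice $\Lambda$, and that the quotient group $G_{\Delta_j}$ as defined with respect to $\Lambda$ really is the one computed in \cref{lem:facets-of-p-simplex}. Both are straightforward once one observes that $\Delta_j \subseteq \lin(\Delta_j)$ forces $\Z^d \cap \Delta_j = \Lambda \cap \Delta_j$.
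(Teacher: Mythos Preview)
Your proof is correct and follows essentially the same approach as the paper: reduce to a $p$-power simplex in the normal form of \cref{lem:p-power-diagonal-C-matrix}, pass to a facet, and invoke \cref{lem:facets-of-p-simplex}. The only difference is that the paper picks the specific facet $\Delta_d$ (i.e.\ $j=d$), which lies in the coordinate hyperplane $\R^{d-1}\times\{0\}$; this makes your Step~3 trivial, since $\Z^d \cap (\R^{d-1}\times\{0\}) = \Z^{d-1}\times\{0\}$ and no abstract sublattice identification is needed.
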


\begin{proof}
Let $\Delta$ be an empty lattice simplex with $\crk(\Delta) = \crke(d)$.
In view of \cref{lem:good-quotient} we may assume that $\Delta$ is a $p$-power simplex, for a suitable prime~$p$.
Moreover, we assume that $\Delta = H S_d$, where $H$ is a matrix satisfying the conclusion of \cref{lem:p-power-diagonal-C-matrix}.
Let $\Delta_d$ be the facet of~$\Delta$ that is contained in $\R^{d-1} \times \{0\}$.
Since~$\Delta$ is empty, we find that $\Delta_d$ is an empty lattice $(d-1)$-simplex considered with respect to the lattice $\Z^{d-1} \times \{0\}$.
Hence, \cref{lem:facets-of-p-simplex} implies that
\[
\crke(d) = \crk(\Delta) = \crk(\Delta_d) + 1 \leq \crke(d-1) + 1 \,,
\]
as claimed.
\end{proof}

As a direct corollary we obtain the maximal cyclicity rank of an empty lattice $5$-simplex, because $\crke(4) = 1$ and $\crke(5) \geq 2$ by the results of Barile et al.~\cite{barilebernardiborisovkantor2011onempty} (see~\eqref{eqn:crke-dim-le-4}).
Moreover, we get a first upper bound for arbitrary dimension that sets the constant $\crke(d)$ apart from its respective parameters~$\crkh(d)$ and $\crks(d)$ for hollow and arbitrary lattice simplices (see \cref{prop:crk-general-and-hollow}).

\begin{corollary}
\label{cor:crk5}\ 
\begin{enumerate}[label={(\roman*)}]
 \item $\crke(5) = 2$, and
 \item $\crke(d) \leq d-3$, for every $d \geq 4$.
\end{enumerate}
\end{corollary}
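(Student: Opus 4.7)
The plan is to derive both parts of the corollary directly from the monotonicity-type inequality in \cref{thm:crk-empty-jumps}, combined with the base values for low dimensions collected in~\eqref{eqn:crke-dim-le-4}.

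\textbf{Part (i):} I would first apply \cref{thm:crk-empty-jumps} with $d=5$ to obtain the upper bound
\[
\crke(5) \leq \crke(4) + 1 = 1 + 1 = 2 \,,
\]
using that $\crke(4) = 1$ by the result of Barile et al.~\cite{barilebernardiborisovkantor2011onempty} recorded in~\eqref{eqn:crke-dim-le-4}. The matching lower bound $\crke(5) \geq 2$ is also noted in~\eqref{eqn:crke-dim-le-4}, via the explicit non-cyclic empty $5$-simplex exhibited in~\cite{barilebernardiborisovkantor2011onempty}, whose quotient group splits into at least two non-trivial cyclic factors. Combining the two inequalities gives $\crke(5) = 2$.

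\textbf{Part (ii):} I would proceed by induction on $d \geq 4$. For the base case $d = 4$ we simply use $\crke(4) = 1 = 4 - 3$, again from~\eqref{eqn:crke-dim-le-4}. For the inductive step, assume $\crke(d-1) \leq (d-1) - 3 = d - 4$ for some $d \geq 5$. Then \cref{thm:crk-empty-jumps} yields
\[
\crke(d) \leq \crke(d-1) + 1 \leq (d-4) + 1 = d - 3 \,,
\]
which closes the induction and establishes the claimed bound for all $d \geq 4$.

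There is no real obstacle here, since both statements are immediate consequences of \cref{thm:crk-empty-jumps} together with the previously known base cases; the only thing to be careful about is that the lower bound $\crke(5) \geq 2$ is cited correctly from~\cite{barilebernardiborisovkantor2011onempty} rather than reproved.
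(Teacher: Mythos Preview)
Your proposal is correct and follows essentially the same argument as the paper: both parts are derived directly from \cref{thm:crk-empty-jumps} together with the base values $\crke(4)=1$ and $\crke(5)\geq 2$ from~\eqref{eqn:crke-dim-le-4}, with part~(ii) obtained by iterating (equivalently, inducting on) the inequality starting at $d=4$.
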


\subsection{The maximal cyclicity rank of empty \texorpdfstring{$2$}{2}-power simplices}

In this section, we explore the maximal cyclicity rank of $p$-power simplices for a fixed prime~$p$, focussing on the case $p=2$.
To this end, we define
\[
\crk_p(d) := \max\left\{\crk(\Delta) : \Delta \subseteq \R^d \text{ an empty } p\text{-power simplex}\right\} \,.
\]
Clearly, for every prime $p$ and every dimension $d$, we have $\crke(d) \geq \crk_p(d)$, and we may look for empty $p$-power simplices of high cyclicity rank for any specific fixed prime~$p$.
The following series of examples shows that
\begin{align}
\crk_p(k + \ell) \geq \ell \quad \textrm{ for every prime } p \textrm{ and every } k \geq 2 \textrm{ and } \ell \in [2^k - k - 1] \,.\label{eqn:exponential-lower-bound}
\end{align}
Note, that these examples generalize the Reeve simplices $\conv\{\zero,e_1,e_2,e_1 + e_2 + p \, e_3 \}$ in~$\R^3$ (see~\cite{reeve1957on}).

\begin{proposition}
\label{prop:crk-p-lower-bound}
Let $p$ be a prime, let $k \geq 2$ be an integer, and let $\ell \in [2^k - k - 1]$.
Define $B_\ell \in \Z^{k \times \ell}$ to be a matrix whose columns are pairwise different and constitute an $\ell$-element subset of $\{0,1\}^k \setminus \{\zero,e_1,\ldots,e_k\}$.
Then, writing
\[
H =
\begin{pmatrix}
E_k & B_\ell \\
      0 & p \, E_\ell
\end{pmatrix} \,,
\]
the $(k+\ell)$-simplex $\Delta = H S_{k+\ell}$ is empty and has $\crk(\Delta) = \ell$.
\end{proposition}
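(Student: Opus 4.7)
The plan is as follows. The cyclicity rank assertion is immediate from \cref{lem:p-power-diagonal-C-matrix}: the given matrix $H$ already has the form $\begin{pmatrix}E_k & B_\ell\\ 0 & pE_\ell\end{pmatrix}$, so $\crk(\Delta) = \ell$ will follow automatically. The substantive content is therefore the emptiness of $\Delta$, and that is where I would concentrate the effort.

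To analyze $\Delta \cap \Z^{k+\ell}$, I would parametrize an arbitrary lattice point as $q = H\lambda$ with $\lambda = (\mu,\nu) \in \R^{k}_{\geq 0} \times \R^{\ell}_{\geq 0}$ satisfying $\sum_i \mu_i + \sum_j \nu_j \leq 1$. The block-triangular form of $H$ forces $\nu = a/p$ with $a \in \{0,1,\dots,p\}^{\ell}$, $\sum_j a_j \leq p$, and then $c := \mu + B_\ell\, a/p \in \Z^{k}$. Writing $s_i := \sum_j (b^{(j)})_i\, a_j$, the inequalities $\mu \geq 0$ and $\sum \lambda \leq 1$ translate into the componentwise lower bound $p c_i \geq s_i$ and the global upper bound
\[
p\sum_{i=1}^{k} c_i \;\leq\; p + \sum_{j=1}^{\ell} a_j\bigl(|b^{(j)}|-1\bigr).
\]
The argument then splits on the value of $\sum_j a_j$.

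The two boundary cases recover precisely the vertices of $\Delta$. If $\sum a_j = 0$, then $\mu = c \in S_k \cap \Z^{k}$, yielding $\zero, e_1, \dots, e_k$. If $\sum a_j = p$, the global bound collapses to $\sum \mu_i = 0$, so $\mu = 0$ and $c = \sum_j (a_j/p)\, b^{(j)}$ writes $c$ as a convex combination of the $\{0,1\}$-vectors $b^{(j)}$. Since $c \in \Z^{k} \cap [0,1]^{k}$ is necessarily a vertex of the unit cube, extremality forces all $b^{(j)}$ with $a_j > 0$ to coincide with $c$; the pairwise distinctness of the columns of $B_\ell$ then forces $a = p\, e_{j_0}$ and $c = b^{(j_0)}$ for some $j_0$, giving $q = h_{k+j_0}$.

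The main obstacle is the intermediate range $1 \leq \sum a_j \leq p-1$, which I expect to dispose of by a tight counting argument. Set $I := \bigcup_{j:\,a_j>0} \supp(b^{(j)}) = \{i : s_i > 0\}$; since by construction each column of $B_\ell$ has Hamming weight at least two, one has $|I| \geq 2$. On one hand, $s_i \leq \sum a_j \leq p-1 < p$, so the integrality of $c$ upgrades the bound $c_i \geq s_i/p$ into $c_i \geq 1$ for every $i \in I$, yielding $\sum c_i \geq |I|$. On the other hand, using the refined estimate $|b^{(j)}| \leq |I|$ (which holds because $\supp(b^{(j)}) \subseteq I$) together with $\sum a_j \leq p-1$ in the global upper bound gives
\[
p \sum c_i \;\leq\; p + (|I|-1)(p-1).
\]
Combining the two bounds yields $p|I| \leq p + (|I|-1)(p-1)$, which forces $|I| \leq 1$, contradicting $|I| \geq 2$. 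The delicate point I anticipate is exactly this: one must use $|b^{(j)}| \leq |I|$ rather than the cruder $|b^{(j)}| \leq k$, since the latter leaves a slack of order $(p-1)(k-1)$ that would grow with $k$, whereas the former makes the lower and upper bounds on $\sum c_i$ incompatible in a uniform way.
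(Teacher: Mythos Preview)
Your argument is correct, and it takes a genuinely different route from the paper's. The paper does not split $\lambda = (\mu,\nu)$ or use the integrality in the bottom $p\,E_\ell$ block at all; instead it observes directly that, because $B_\ell$ is a $0/1$-matrix and $0 < |\lambda| \leq 1$, the first $k$ coordinates $a_r$ of $H\lambda$ lie in $\{0,1\}$, and whenever $a_r = 1$ the full mass of $\lambda$ is concentrated on $\{r\} \cup I_r$. From this it argues combinatorially: a positive $\lambda_j$ with $j>k$ activates at least two rows $a_r = a_s = 1$ (since every column of $B_\ell$ has weight $\geq 2$), and the resulting support constraints force any two columns with positive coefficient to have identical support, hence to coincide. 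No case split on $\sum a_j$ and no summation inequality enter.

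Your approach trades that combinatorial pigeonholing for an arithmetic one: the key inequality $p|I| \leq p + (|I|-1)(p-1)$, obtained by bounding $|b^{(j)}| \leq |I|$ rather than $|b^{(j)}| \leq k$, is a tight and elegant way to dispatch the intermediate range, and your emphasis on why the cruder bound fails is well placed. The cost is a slightly longer setup (the boundary cases $\sum a_j \in \{0,p\}$ need separate treatment), whereas the paper's argument is uniform over all of $S_d$. On the other hand, your method makes the role of the column weights and supports quantitatively explicit, which may generalize more readily to matrices $B$ with larger entries.
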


\begin{proof}
The fact that $\crk(\Delta) = \ell$ is immediate from the construction of~$H$ and \cref{lem:p-power-diagonal-C-matrix}.

So, let's prove that~$\Delta$ is empty.
Let $d = k + \ell$ and consider $\lambda = (\lambda_1,\ldots,\lambda_d) \in \R^d$ with $\lambda_i \geq0$, for every $i \in [d]$, and with $0 < |\lambda| := \lambda_1 + \ldots + \lambda_d \leq 1$.
We need to show that if $H \lambda \in \Z^d$, then $\lambda = e_j$, for some $j \in [d]$.

To this end, assume that $H \lambda = (a_1,\ldots,a_d) \in \Z^d$.
Because $B_\ell$ is a $0/1$-matrix, for every index $r \in [k]$ there exists a subset $I_r \subseteq \{k+1,\ldots,d\}$ such that
\[
a_r = \lambda_r + \sum_{i \in I_r} \lambda_i \,.
\]
Since $a_r \in \Z$ and $0 < |\lambda| \leq 1$, we thus have $a_r \in \{0,1\}$, for every $r \in [k]$.
Observe that, if $a_r = 1$, then $|\lambda| = 1$, and we have $\lambda_t = 0$, for every $t \in [d] \setminus (\{r\} \cup I_r)$.
This implies the following:
\begin{itemize}
 \item If there is $j \in [k]$ with $\lambda_j > 0$, then $a_j = 1$, and thus $\lambda_r = 0$, for every $r \in [k] \setminus\{j\}$.
 \item If there is $j \in \{k+1,\ldots,d\}$ with $\lambda_j > 0$, then $\lambda_r = 0$, for every $r \in [k]$.
 Indeed, by construction the $(j-k)$th column of~$B_\ell$ has at least two non-zero entries, say in rows~$r$ and~$s$, and thus $a_r = a_s = 1$.
\end{itemize}
If we combine these two observations, then we see that if there is an index $j \in [k]$ with $\lambda_j > 0$, then $\lambda = e_j$.

Now, we assume that $\lambda_1 = \ldots = \lambda_k = 0$ and that there are distinct indices $r,s \in \{k+1,\ldots,d\}$ such that $\lambda_r > 0$ and $\lambda_s > 0$.
Again, the $(r-k)$th and the $(s-k)$th column of~$B_\ell$ contain at least two non-zero entries.
Each such non-zero entry, say in row~$q$, forces $\lambda_i = 0$, for all the indices~$i$, so that the entry of~$B_\ell$ in row~$q$ and column~$i-k$ equals~$0$.
This is however only possible if the $(r-k)$th and the $(s-k)$th column of~$B_\ell$ coincide, contradicting the definition of~$B_\ell$.
As a result, the vector $\lambda$ has exactly one non-zero entry, say $\lambda_j > 0$ for some $j \in \{k+1,\ldots,d\}$, the corresponding column of~$B_\ell$ has a non-zero entry in the $r$th row, and we have $a_r = 1 = \lambda_j$.
Therefore, $\lambda = e_j$ as desired.
\end{proof}

In the case $p=2$ the necessary conditions in \cref{lem:necessary-conds-empty-p-power} imply that the simplices in \cref{prop:crk-p-lower-bound} have the maximal cyclicity rank among all $2$-power simplices.

\begin{theorem}
\label{thm:crk-2-exact}
For every $d \geq 3$, we have
\[
\crk_2(d) = d - \lfloor \log_2(d) \rfloor - 1 \,.
\]
\end{theorem}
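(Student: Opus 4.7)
The plan is to prove the two inequalities $\crk_2(d) \geq d - \lfloor \log_2(d) \rfloor - 1$ and $\crk_2(d) \leq d - \lfloor \log_2(d) \rfloor - 1$ separately. The lower bound is immediate from \cref{prop:crk-p-lower-bound} once I pick suitable parameters, while the upper bound is a simple counting argument based on the Hermite normal form and the necessary conditions of \cref{lem:necessary-conds-empty-p-power}. Throughout, I will use the elementary identity $\lceil \log_2(d+1) \rceil = \lfloor \log_2(d) \rfloor + 1$, valid for every positive integer~$d$, which follows from the fact that $\lfloor \log_2(d) \rfloor = m$ means $2^m \leq d < 2^{m+1}$, so $2^{m+1} \geq d+1$ and $2^m \leq d$.

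For the lower bound, I would set $k := \lfloor \log_2(d) \rfloor + 1$ and $\ell := d - k = d - \lfloor \log_2(d) \rfloor - 1$. For $d \geq 3$ one checks $k \geq 2$ and $\ell \geq 1$, and the inequality $2^k \geq d+1$ gives $\ell = d - k \leq 2^k - k - 1$. Thus $\ell \in [2^k - k - 1]$ and $k\geq 2$, so \cref{prop:crk-p-lower-bound} applied with $p = 2$ produces an empty $2$-power simplex in $\R^{k+\ell} = \R^d$ of cyclicity rank $\ell$, which yields $\crk_2(d) \geq d - \lfloor \log_2(d) \rfloor - 1$.

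For the upper bound, let $\Delta$ be an empty $2$-power simplex with $\crk(\Delta) = r$. By the reductions in Section~3 I may assume $\Delta = H S_d$ where
\[
H = \begin{pmatrix} E_{d-r} & B \\ 0 & 2\, E_r \end{pmatrix}
\]
is in Hermite normal form; since the diagonal block equals $p \, E_r = 2 \, E_r$, all entries of $B$ lie in $\{0,1\}$. Now \cref{lem:necessary-conds-empty-p-power}(i) forces every column of $B$ to have at least two non-zero entries, and \cref{lem:necessary-conds-empty-p-power}(ii) forces the columns to be pairwise distinct (for $\{0,1\}$-vectors, being integer multiples of one another coincides with being equal). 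Hence the $r$ columns of $B$ are distinct elements of $\{0,1\}^{d-r} \setminus \{\zero,e_1,\ldots,e_{d-r}\}$, a set of cardinality $2^{d-r} - (d-r) - 1$. Consequently $r \leq 2^{d-r} - (d-r) - 1$, i.e., $2^{d-r} \geq d+1$, which rearranges to $d - r \geq \lceil \log_2(d+1) \rceil = \lfloor \log_2(d) \rfloor + 1$, giving $r \leq d - \lfloor \log_2(d) \rfloor - 1$. There is no real obstacle here: the work is effectively done by \cref{prop:crk-p-lower-bound} and \cref{lem:necessary-conds-empty-p-power}, and all that remains is the elementary counting argument together with the logarithmic identity above.
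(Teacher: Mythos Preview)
Your proof is correct and follows essentially the same route as the paper: the lower bound via \cref{prop:crk-p-lower-bound} with $k=\lfloor\log_2(d)\rfloor+1$, and the upper bound by counting admissible columns of~$B$ using \cref{lem:necessary-conds-empty-p-power}(i)--(ii). The only cosmetic difference is that the paper invokes \cref{cor:crk5}(ii) to note $d-r\geq 3$ before counting, whereas you go straight to the inequality $r\leq 2^{d-r}-(d-r)-1$ and extract $d-r\geq\lceil\log_2(d+1)\rceil$; both lead to the same conclusion.
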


\begin{proof}
Let $\Delta = H S_d$ be an empty $2$-power simplex.
Letting $\crk(\Delta) = \ell$, we can write $d = k + \ell$, for some $k \geq 3$, by the upper bound in \cref{cor:crk5}~(ii).
Using \cref{lem:necessary-conds-empty-p-power}~(i) and~(ii), we see that the matrix~$B \in \{0,1\}^{k \times \ell}$ in the representation
\[
H =
\begin{pmatrix}
E_k & B \\
      0 & 2 \, E_\ell
\end{pmatrix} \,,
\]
has pairwise distinct columns, all of which have at least two non-zero entries.
Hence, $\crk(\Delta) = \ell \leq 2^k - k - 1$.

This holds for every partition of $d = k + \ell$, in particular for the one minimizing~$k$.
More precisely, if $k$ is such that $2^{k-1} \leq d < 2^k$, then $\ell = d - k = d - (\lfloor \log_2(d) \rfloor + 1)$.
The construction leading to~\eqref{eqn:exponential-lower-bound} shows that this upper bound on $\crk_2(d)$ is attained.
\end{proof}

Together with \cref{cor:crk5}, we thus obtained the exact value of $\crke(d)$, for all $d \leq 7$.

\begin{corollary}
\label{cor:crke-p2-dim-le-7}
$\crke(6) = 3$ and $\crke(7) = 4$.
\end{corollary}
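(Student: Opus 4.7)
The plan is to sandwich $\crke(d)$ between two bounds that have already been established, for $d \in \{6,7\}$. The upper bound comes from \cref{cor:crk5}~(ii), which gives $\crke(d) \leq d-3$, so that $\crke(6) \leq 3$ and $\crke(7) \leq 4$. The matching lower bound comes from \cref{thm:crk-2-exact} via the chain $\crke(d) \geq \crk_2(d) = d - \lfloor \log_2(d) \rfloor - 1$, which is valid for any prime~$p$ and in particular for $p=2$, since any empty $p$-power simplex is in particular an empty lattice simplex.

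The key arithmetic to verify is that $\lfloor \log_2(6)\rfloor = \lfloor \log_2(7)\rfloor = 2$, so that
\[
\crk_2(6) = 6 - 2 - 1 = 3 \quad\text{and}\quad \crk_2(7) = 7 - 2 - 1 = 4 \,.
\]
Combining these two evaluations with the upper bound $\crke(d) \leq d-3$ gives $3 \leq \crke(6) \leq 3$ and $4 \leq \crke(7) \leq 4$, yielding the claim.

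There is no real obstacle here — the corollary is a direct numerical consequence of \cref{thm:crk-2-exact} together with \cref{cor:crk5}~(ii), so the proof is essentially a one-line calculation citing those two results. The only thing worth emphasizing in the write-up is the inequality $\crke(d) \geq \crk_2(d)$, which holds tautologically because the class of empty $2$-power simplices is a subclass of the class of empty lattice simplices, and thus the maximum of $\crk$ over the smaller class cannot exceed the maximum over the larger one.
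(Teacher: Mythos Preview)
Your proof is correct and follows exactly the approach the paper intends: the upper bound $\crke(d)\leq d-3$ from \cref{cor:crk5}~(ii) combined with the lower bound $\crke(d)\geq\crk_2(d)=d-\lfloor\log_2(d)\rfloor-1$ from \cref{thm:crk-2-exact}, evaluated at $d\in\{6,7\}$. The paper itself presents the corollary as an immediate consequence of these two results without further argument.
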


\section{Large dimensions and the case \texorpdfstring{$p=3$}{p=3}}
\label{sect:p=3}

Starting from dimension $d = 8$, empty $2$-power simplices alone do not describe the function $\crke(d)$ anymore, so that we need to pass to larger primes~$p$.
In this last section, we explore this for $d \in \{8,9\}$, and asymptotically for large~$d$ and fixed~$p$.

By extending the argument in the proof of \cref{thm:crk-2-exact}, we can give an upper bound on the cyclicity rank of empty $p$-power simplices, for arbitrary primes~$p$.

\begin{proposition}
\label{prop:crk-p-upper-bound}
For every prime $p$, we have
\[
\crk_p(d) \leq d - \lfloor \log_p(d) \rfloor - 1 \,.
\]
\end{proposition}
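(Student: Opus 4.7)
The plan is to mirror the strategy that worked for $p=2$ in \cref{thm:crk-2-exact}, replacing the elementary cardinality count in $\{0,1\}^k$ by a projective count over $\F_p$. First I would take an empty $p$-power simplex $\Delta \subseteq \R^d$ with $\crk(\Delta) = \ell$ and, using \cref{thm:HNF} and \cref{lem:p-power-diagonal-C-matrix}, put it into the Hermite normal form
\[
H = \begin{pmatrix} E_k & B \\ 0 & p\, E_\ell \end{pmatrix}, \qquad k = d - \ell,
\]
with $B \in \{0,1,\ldots,p-1\}^{k \times \ell}$. The goal then reduces to bounding $\ell$ from above in terms of $k$, after which the desired inequality $\ell \leq d - \lfloor \log_p d \rfloor - 1$ will be extracted by choosing $k$ minimal.

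The key, and the only place where I expect genuine thinking to be required, is to reinterpret conditions~(i) and~(ii) of \cref{lem:necessary-conds-empty-p-power} modulo~$p$. Since every entry of $B$ lies in $\{0,\ldots,p-1\}$, condition~(i) (``each column has at least two non-zero entries'') translates, after reducing modulo~$p$, to saying that no column of $B$, viewed as an element of $\F_p^k$, lies on one of the $k$ coordinate axis lines $\F_p \cdot e_i$. Condition~(ii) (``no two columns are integer multiples of one another'') translates, since any such multiplier necessarily lies in $\{1,\ldots,p-1\} = \F_p^\times$, to saying that the columns represent pairwise distinct one-dimensional subspaces of~$\F_p^k$. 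Thus the columns of $B$ determine $\ell$ distinct non-axis lines in $\F_p^k$.

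The finishing step is a routine count: the projective space $\P^{k-1}(\F_p)$ has $(p^k - 1)/(p-1)$ points, and after subtracting the $k$ axis lines I obtain
\[
\ell \;\leq\; \frac{p^k - 1}{p - 1} - k.
\]
Combined with $d = k + \ell$ this rearranges to $d \leq (p^k - 1)/(p - 1)$, whence $p^k \geq (p-1)d + 1 > d$, so $k > \log_p d$. As $k$ is an integer, this is equivalent to $k \geq \lfloor \log_p d \rfloor + 1$, and therefore $\ell = d - k \leq d - \lfloor \log_p d \rfloor - 1$, as claimed. As a sanity check, specializing to $p = 2$ turns the bound $(p^k - 1)/(p-1) - k$ into $2^k - k - 1$, recovering the count used in \cref{thm:crk-2-exact}.
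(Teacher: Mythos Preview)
Your overall strategy coincides with the paper's, but the projective reinterpretation of condition~(ii) is a genuine gap. \cref{lem:necessary-conds-empty-p-power}(ii) only rules out $b_i = \mu b_j$ for an \emph{integer} $\mu$ (necessarily in $\{1,\ldots,p-1\}$); it does \emph{not} rule out $b_i \equiv \lambda b_j \pmod p$ for $\lambda \in \F_p^\times$. These are different conditions once some entry of $\lambda b_j$ (computed in~$\Z$) exceeds $p-1$. For a concrete obstruction, take $p=5$, $k=2$, $b_1=(1,2)^\intercal$, $b_2=(3,1)^\intercal$: over $\F_5$ one has $b_2 = 3\,b_1$, so they span the same line, yet neither is an integral multiple of the other in $\{0,\ldots,4\}^2$. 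Hence condition~(ii) does not force the columns of~$B$ to represent distinct points of $\mathbb{P}^{k-1}(\F_p)$, and your bound $\ell \leq (p^k-1)/(p-1)-k$ is not justified.

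The repair is immediate and is exactly what the paper does: use only the special case $\mu=1$ of condition~(ii), which says the columns of $B$ are pairwise distinct in $\{0,\ldots,p-1\}^k$. Combined with condition~(i), this gives
\[
\ell \;\leq\; p^k - \bigl(1 + k(p-1)\bigr) \;=\; p^k - (p-1)k - 1,
\]
whence $d = k+\ell \leq p^k - (p-2)k - 1 < p^k$, so $k \geq \lfloor \log_p d\rfloor + 1$ and the claimed inequality follows. Your final deduction from ``$d < p^k$'' onward is correct; only the intermediate count needs to be weakened.
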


\begin{proof}
Let $\Delta = H S_d$ be an empty $p$-power simplex.
Letting $\crk(\Delta) = \ell$, we can write $d = k + \ell$, for some $k \geq 3$, by the upper bound in \cref{cor:crk5}~(ii).
Using \cref{lem:necessary-conds-empty-p-power}~(i) and~(ii), we see that the matrix~$B \in \{0,1,\ldots,p-1\}^{k \times \ell}$ in the representation
\[
H =
\begin{pmatrix}
E_k & B \\
      0 & p \, E_\ell
\end{pmatrix} \,,
\]
has pairwise distinct columns, all of which have at least two non-zero entries.
Hence, $\crk(\Delta) = \ell \leq p^k - (p-1)k - 1$.
Parts~(ii) and~(iii) of \cref{lem:necessary-conds-empty-p-power} actually allow for a more precise upper bound, which however leads to the same asymptotic result.

Observe that the bound holds for every partition of $d = k + \ell$, in particular for the one minimizing~$k$.
This corresponds to choosing~$k$ such that
\begin{align}
p^{k-1} - (p-2)(k-1) \leq d < p^k - (p-2)k \,.
\end{align}
In particular, $d < p^k$, so that $k \geq \lfloor \log_p(d) \rfloor + 1$, and hence $\crk(\Delta) = \ell = d - k \leq d - \lfloor \log_p(d) \rfloor - 1$ as claimed.
\end{proof}

Together with the lower bound on $\crke(d)$ implied by \cref{thm:crk-2-exact}, the previous upper bound suggests the following question:

\begin{question}
\label{qu:crke-asymptotics}
Is the asymptotic behavior of the maximal cyclicity rank of empty $d$-simplices given by
\[
\crke(d) = d - \Theta(\log(d)) \,?
\]
\end{question}

One may approach this problem as follows:
Let $p(d)$ be the smallest prime number with $\crke(d) = \crk_{p(d)}(d)$, which exists in view of \cref{lem:good-quotient}.
With this notation, \cref{prop:crk-p-upper-bound} gives
\[
\crke(d) \leq d - \Omega\left(\frac{\log(d)}{\log(p(d))}\right) \,,
\]
so that the question boils down to asking whether $p(d)$ is bounded by a constant, independently of~$d$.
While this might be unlikely, any bound of order $p(d) \in o(d)$, would give an asymptotic improvement on the linear upper bound in \cref{cor:crk5}~(ii).

On a different note, observe that any explicit computable upper bound on~$p(d)$ would leave only finitely many empty $d$-simplices to investigate (see, for instance~\cref{lem:p-power-diagonal-C-matrix}).
In particular, the number~$\crke(d)$ would be algorithmically computable in fixed dimension~$d$.

%

\subsection{Some results for empty \texorpdfstring{$3$}{3}-power simplices}

In dimension $d=8$, \cref{thm:crk-2-exact} gives $\crke(8) \geq \crk_2(8) = 4$, while \cref{cor:crk5} shows that $\crke(8) \leq 5$.
It turns out that the upper bound gives the correct value, which can be shown by exhibiting a concrete empty $p$-power simplex~$\Delta \subseteq \R^8$ with $\crk(\Delta) = 5$, for some prime $p \geq 3$:

\begin{proposition}
\label{prop:crk-3-d8}
We have $\crke(8) = \crk_3(8) = 5$.

Moreover, up to unimodular equivalence, the simplex $\Delta_8 = H S_8$ with
\[
H =
\begin{pmatrix}
E_3 & B \\
      0 & 3 \, E_5
\end{pmatrix}
\quad \textrm { and } \quad
B = \begin{pmatrix}
1 & 0 & 1 & 1 & 2 \\
0 & 1 & 1 & 2 & 1 \\
1 & 1 & 2 & 2 & 2
\end{pmatrix}
\]
is the only empty $3$-power simplex with cyclicity rank~$5$.
\end{proposition}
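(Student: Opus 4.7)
The upper bound $\crke(8) \le 5$ comes for free from \cref{cor:crk5}(ii). All three equalities in the statement therefore follow from the \emph{existence} of an empty $3$-power simplex in $\R^8$ of cyclicity rank $5$, together with the \emph{uniqueness} of this construction up to unimodular equivalence; the plan is to split the proof accordingly.

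For existence, the task is to verify that the specific $\Delta_8 = HS_8$ displayed has $\crk(\Delta_8) = 5$ and is empty. The cyclicity rank is immediate from \cref{lem:p-power-diagonal-C-matrix}, as $H$ is already in the required Hermite normal form with bottom-right block $3\,E_5$. For emptiness, the argument mirrors the proof of \cref{prop:crk-p-lower-bound}: if $H\lambda \in \Z^8$ with $\lambda \in S_8$, the last five rows force $\lambda_4,\ldots,\lambda_8 \in \{0,\tfrac13,\tfrac23\}$, and $\sum_{j=4}^{8}\lambda_j \le 1$ restricts these to a short list of tuples. For each such choice, the integrality of the first three coordinates of $H\lambda$ combined with $\lambda_1,\lambda_2,\lambda_3 \ge 0$ and $|\lambda|\le 1$ pins down a finite case analysis showing $\lambda$ must be a vertex of $S_8$.

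For uniqueness, we bring an arbitrary empty $3$-power simplex $\Delta \subseteq \R^8$ with $\crk(\Delta) = 5$ into its canonical form
\[
H' \;=\; \begin{pmatrix} E_3 & B \\ 0 & 3\,E_5 \end{pmatrix}, \qquad B \in \{0,1,2\}^{3\times 5},
\]
as guaranteed by \cref{lem:p-power-diagonal-C-matrix}. The conditions of \cref{lem:necessary-conds-empty-p-power} then drastically prune the admissible columns of $B$: each column has at least two non-zero entries (ruling out $e_i$ and $2e_i$), and a column with exactly two non-zero entries $(a,b)$ must satisfy $a=1$, $b=1$, or $a+b=3$. This leaves exactly $17$ admissible column types. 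From these we must pick $5$ columns that are pairwise not integer multiples (so in particular $(1,1,1)^\intercal$ and $(2,2,2)^\intercal$ cannot both appear), such that every row of $B$ carries at least one non-zero entry (using $\crke(7)=4<5$ via \cref{lem:necessary-conds-empty-p-power}(iv)), and such that every sub-selection of at most three columns has some row whose entries sum to a value not divisible by $3$.

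The matrices surviving these necessary filters must each be tested for emptiness directly as in the existence step, since \cref{lem:necessary-conds-empty-p-power} is not sufficient in general. The resulting list of empty ones is then partitioned into unimodular equivalence classes under the natural action of permuting the three rows and the five columns of $B$, together with the origin-vertex substitutions encoded by the matrices $U_k$ from \cref{prop:lattice-simplex-vs-sublattice}, which may turn one Hermite normal form of our shape into a superficially very different-looking one. The claim of the proposition is that exactly one equivalence class survives, represented by the $B$ shown. The main obstacle is precisely this last enumeration: the initial pool of $\binom{17}{5}=6188$ candidates is cut down sharply by \cref{lem:necessary-conds-empty-p-power}, but the direct emptiness check for each remaining $B$ and, crucially, the identification of when two surviving matrices lie in a common $U_k$-orbit are fiddly; this step is most honestly executed by a short computer search, which is feasible because every loop runs over a finite, explicitly bounded set.
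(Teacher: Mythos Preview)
Your proposal is correct and follows essentially the same approach as the paper: verify that $\Delta_8$ is empty (the paper defers this to a \texttt{sagemath} check, footnoting a hand proof in the first author's thesis), enumerate the $\binom{17}{5}=6188$ candidate matrices $B$ built from the admissible column list dictated by \cref{lem:necessary-conds-empty-p-power}, filter by emptiness (the paper reports $18$ survivors), and then collapse these into a single unimodular equivalence class. The only notable difference is the equivalence test itself: the paper invokes the Kreuzer--Skarke normal form, whereas you propose tracking the explicit $P\,U_k$-action of \cref{prop:lattice-simplex-vs-sublattice} together with re-normalisation into the Hermite shape~\eqref{eqn:HNF-shape-simplex}; both are valid computational routes to the same conclusion.
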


\begin{proof}
By construction we clearly have $\crk(\Delta_8) = 5$.
The fact that $\Delta_8$ is empty can be checked, for instance, by with the help of a computer using \texttt{sagemath}~\cite{sagemath}.%
\footnote{A pen and paper proof that $\Delta_8$ is empty can be found in the master thesis of the first author~\cite[Ex.~3.3.1]{abend2023masterthesis}.}

In order to check the claimed uniqueness of~$\Delta_8$ we first observe that the conditions in \cref{lem:necessary-conds-empty-p-power} imply that it suffices to consider matrices~$B$ that are composed of a choice of~$5$ of the columns of the following matrix:
\begin{align}
\setcounter{MaxMatrixCols}{20}
\begin{pmatrix}
1 & 1 & 0 & 1 & 2 & 1 & 2 & 0 & 0 & 1 & 1 & 1 & 2 & 1 & 2 & 2 & 2 \\
1 & 0 & 1 & 2 & 1 & 0 & 0 & 1 & 2 & 1 & 1 & 2 & 1 & 2 & 1 & 2 & 2 \\
0 & 1 & 1 & 0 & 0 & 2 & 1 & 2 & 1 & 1 & 2 & 1 & 1 & 2 & 2 & 1 & 2
\end{pmatrix} \,.\label{eqn:matrix-of-B-columns}
\end{align}
This amounts to checking emptiness of a total of $\binom{17}{5} = 6188$ lattice $8$-simplices.
Using \texttt{sagemath} again this can be done in about $30$ minutes of computer time, and results in a list of $18$ possibilities for ``empty'' matrices~$B$.
Checking unimodular equivalence of two lattice polytopes can be done via the concept of a normal form as proposed by Kreuzer \& Skarke~\cite{kreuzerskarke1998classification} (see also~\cite{griniskasprzyk2013normal}), whose algorithm has been implemented in \texttt{sagemath}.
It turns out that the $18$ empty lattice simplices above are pairwise unimodularly equivalent.
The code and results of the aforementioned computer calculations can be found at \url{https://github.com/mschymura/cyclicity-rank-computations}.
\end{proof}

\begin{remark}
The matrix~$B$ in the construction of \cref{prop:crk-3-d8} is inspired by Harborth's example showing that $f(3,3) \geq 19$, where $f(n,d)$ denotes the smallest integer~$m$ such that any choice of~$m$ elements from $(\Z_n)^d$ (repetitions are allowed) contains~$n$ elements that sum to zero in~$(\Z_n)^d$.
In fact, Harborth's example consists of the $9$ vectors in~$(\Z_3)^3$ that correspond to projecting the~$9$ vertices of~$\Delta_8$ to the first three coordinates (see~\cite{elsholtz2004lowerbounds,harborth1973einextremal} for more information on the multidimensional zero-sum problem around the function $f(n,d)$).

We do not know whether this connection is a coincidence, or whether there is a deeper connection between the problems around $f(n,d)$ and $\crk_p(d)$.
\end{remark}

With such a computational approach, we found that lifting the simplex in \cref{prop:crk-3-d8} to an empty $3$-power simplex gives the maximal cyclicity rank in dimension~$9$.
This yields $\crke(9) \in \{5,6\}$, but we do not know whether there is a prime $p > 3$ such that $\crk_p(9) = 6$.

\begin{proposition}
\label{prop:p=3-dim8+9}
$\crk_2(9) = \crk_3(9) = 5$.%
\footnote{A proof without computer assistance again can be found in~\cite[Thm.~3.3.5]{abend2023masterthesis}.}
\end{proposition}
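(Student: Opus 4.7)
The proposition splits into two independent claims. The first, $\crk_2(9) = 5$, is an immediate substitution into \cref{thm:crk-2-exact}: with $d = 9$ we obtain $\crk_2(9) = 9 - \lfloor \log_2 9 \rfloor - 1 = 9 - 3 - 1 = 5$.

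For the lower bound $\crk_3(9) \geq 5$, my plan is to lift the $8$-dimensional simplex $\Delta_8$ from \cref{prop:crk-3-d8} to dimension nine. Writing $\Delta_8 = A S_8$ with $A \in \Z^{8 \times 8}$, I set $A' := \diag(A, 1) \in \Z^{9 \times 9}$ and $\Delta_9 := A' S_9$. Since $A'\Z^9 = A\Z^8 \times \Z$, the quotient group factors as $G_{\Delta_9} \cong G_{\Delta_8} \cong (\Z_3)^5$, so $\Delta_9$ is a $3$-power simplex of cyclicity rank five. Emptiness of $\Delta_9$ is straightforward: any lattice point inside $\Delta_9$ arises from a convex combination whose last coordinate $\lambda_9$ must lie in $\{0,1\}$; the case $\lambda_9 = 1$ gives the apex vertex $e_9$, and the case $\lambda_9 = 0$ reduces to the emptiness of~$\Delta_8$.

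The substantive content is the upper bound $\crk_3(9) \leq 5$. Arguing by contradiction, assume there exists an empty $3$-power simplex $\Delta \subseteq \R^9$ with $\crk(\Delta) = 6$. By \cref{lem:p-power-diagonal-C-matrix} applied to the Hermite normal form representative, we may take $\Delta = H S_9$ with
\[
H = \begin{pmatrix} E_3 & B \\ 0 & 3\, E_6 \end{pmatrix}, \qquad B \in \{0,1,2\}^{3 \times 6}.
\]
The necessary conditions in \cref{lem:necessary-conds-empty-p-power} then drastically restrict $B$: parts~(i) and~(iii) force every column of $B$ into the $17$-vector list~\eqref{eqn:matrix-of-B-columns} already used in the proof of \cref{prop:crk-3-d8}; part~(ii) prevents picking both $(1,1,1)^\intercal$ and $(2,2,2)^\intercal$ at the same time; and part~(v) applied with $t \in \{2,3\}$ forbids any sub-collection of columns whose row sums vanish modulo three.

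The argument is then completed by verifying that no surviving choice of six columns from~\eqref{eqn:matrix-of-B-columns} yields an empty simplex. I would carry out this final step by enumerating the at most $\binom{16}{6}$ unordered candidates (further trimmable by condition~(v)), checking emptiness with the same \texttt{sagemath} infrastructure as in \cref{prop:crk-3-d8}; a pen-and-paper version of this case analysis is sketched in the master thesis~\cite{abend2023masterthesis} cited in the footnote. The main obstacle is precisely this combinatorial exhaustion: the necessary conditions of \cref{lem:necessary-conds-empty-p-power} are by themselves insufficient to exclude cyclicity rank six, since one can find sextuples of columns from~\eqref{eqn:matrix-of-B-columns} satisfying all of (i)--(v) whose associated simplex nonetheless turns out to be non-empty, so an individual emptiness test for each surviving candidate seems unavoidable.
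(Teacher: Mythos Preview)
Your proposal is correct and mirrors the paper's proof: $\crk_2(9)$ from \cref{thm:crk-2-exact}, the lower bound by appending a unit vector to~$\Delta_8$ (the paper inserts it as the fourth rather than the last column, which is immaterial), and the upper bound by enumerating the six-column selections from~\eqref{eqn:matrix-of-B-columns} and verifying non-emptiness in \texttt{sagemath}. One small slip: condition~(ii) excludes only the single pair $\{(1,1,1)^\intercal,(2,2,2)^\intercal\}$, so the candidate count is $\binom{17}{6}-\binom{15}{4}=11011$ rather than $\binom{16}{6}$; the paper simply runs through all $\binom{17}{6}=12376$ without this reduction.
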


\begin{proof}
The fact that $\crk_2(9) = 5$ holds by \cref{thm:crk-2-exact}.
The simplex $\Delta_9 = H S_9$ with
\[
H =
\begin{pmatrix}
E_4 & B \\
      0 & 3 \, E_5
\end{pmatrix}
\quad \textrm { and } \quad
B = \begin{pmatrix}
0 & 0 & 0 & 0 & 0 \\
1 & 0 & 1 & 1 & 2 \\
0 & 1 & 1 & 2 & 1 \\
1 & 1 & 2 & 2 & 2
\end{pmatrix}
\]
is empty by \cref{prop:crk-3-d8} and has $\crk(\Delta_9) = 5$.
In order to show that $\crk_3(9) \leq 5$, we consider the possibly empty $3$-power simplices~$\Delta$ of $\crk(\Delta) = 6$, each of which correspond to a choice of~$6$ columns from the matrix in~\eqref{eqn:matrix-of-B-columns}.
This leaves us with $\binom{17}{6} = 12376$ examples, and another computation with \texttt{sagemath} shows that none of these simplices is empty.

Unlike for dimension~$8$, it is not feasible to compute all empty $3$-power $9$-simplices with cyclicity rank~$5$, without significant further reductions.
Indeed, one would need to check all choices of~$5$ columns of a list of~$59$ vectors $b \in \{0,1,2\}^4$ left by \cref{lem:necessary-conds-empty-p-power}.
These are more than $5$ million simplices, with an expected one month of computation time on a standard computer.
\end{proof}

As a final result, we describe how to lift the example in \cref{prop:crk-3-d8} to empty $3$-power simplices in higher dimensions and which have a cyclicity rank larger than any $2$-power simplex of the same dimension.

\begin{proposition}
\label{prop:lift-empty-3-powers}
Let $k,\ell \in \N$ be such that $\crk_3(k+\ell) \geq \ell$.
Then, $\crk_3(k+1+m) \geq m$, where $m = 2\ell + k$.
\end{proposition}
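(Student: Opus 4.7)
The plan is to construct an explicit empty $3$-power simplex $\Delta' \subseteq \R^{k+1+m}$ with cyclicity rank $m = 2\ell+k$. The hypothesis $\crk_3(k+\ell) \geq \ell$ yields, via a standard intermediate-lattice reduction (\cref{lem:intermediate-empty}) inside $(\Z_3)^{\crk_3(k+\ell)}$, an empty $3$-power simplex $\Delta = H S_{k+\ell}$ with $\crk(\Delta) = \ell$ exactly, so that by \cref{lem:p-power-diagonal-C-matrix} we may assume
\[
H = \begin{pmatrix} E_k & B \\ 0 & 3\,E_\ell \end{pmatrix}, \qquad B = (b_1,\ldots,b_\ell) \in \{0,1,2\}^{k\times \ell}.
\]
I then define the lift by doubling the $B$-block, appending an identity block, and inserting a new coordinate row that distinguishes the two copies of $B$:
\[
B' = \begin{pmatrix} B & B & E_k \\ \zero^\intercal & \one^\intercal & \one^\intercal \end{pmatrix} \in \{0,1,2\}^{(k+1)\times m}, \qquad H' = \begin{pmatrix} E_{k+1} & B' \\ 0 & 3\,E_m \end{pmatrix},
\]
and set $\Delta' = H' S_{k+1+m}$. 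Then $H'$ is already in Hermite normal form and \cref{lem:p-power-diagonal-C-matrix} gives $\crk(\Delta') = m$. What remains is to prove that $\Delta'$ is empty.

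To this end, I take $\lambda = (\lambda_1,\ldots,\lambda_{k+1},\mu) \in S_{k+1+m}$ with $H'\lambda \in \Z^{k+1+m}$ and argue $\lambda$ must be a vertex. Writing $\mu = (\alpha,\beta,\gamma)/3$ with $\alpha,\beta \in \{0,1,2\}^\ell$ and $\gamma \in \{0,1,2\}^k$ (the case of some $\mu_j = 1$ forces $\lambda$ to be a vertex outright), the integrality of $\lambda + B'\mu$ becomes
\[
\lambda_i + \tfrac{1}{3}\bigl(B(\alpha+\beta)+\gamma\bigr)_i \in \Z_{\geq 0} \text{ for } i \leq k, \qquad \lambda_{k+1} + \tfrac{1}{3}\bigl(\textstyle\sum\beta + \sum\gamma\bigr) \in \Z_{\geq 0},
\]
together with $\sum_{i=1}^{k+1} \lambda_i + \tfrac{1}{3}(\sum\alpha+\sum\beta+\sum\gamma) \leq 1$. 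I then split on $s := \sum\alpha+\sum\beta+\sum\gamma \in \{0,1,2,3\}$. The constraint on the last coordinate sharply restricts the admissible splits: for $s = 3$ it forces $\sum\beta + \sum\gamma \in \{0,3\}$, and for $s = 2$ it forces $\sum\beta + \sum\gamma \in \{0,2\}$, leaving only a handful of sub-configurations to check.

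The key to dispatching each sub-configuration is that emptyness of $\Delta$ translates precisely into two obstructions on $B$: (A) for every $a \in \{0,1,2\}^\ell$ with $\sum a = 3$, $B a \not\equiv \zero \pmod 3$; and (B) for every such $a$ with $\sum a = 2$, the integer vector $B a$ is not of the form ``all entries in $\{0,3\}$ with at most one entry equal to $2$''. Each nontrivial sub-configuration matches one of (A) or (B), typically applied with $a = \alpha$ (when $\beta = \gamma = 0$), with $a = \beta$ (when $\alpha = 0$ and $\gamma$ contributes the exceptional $2$-entry via $\gamma = e_{j_0}$), or similarly after absorbing $\gamma$ into the ``$2$-entry'' slot. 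The residual cases collapse a column $b_{i_0}$ of $B$ to having fewer than two nonzero entries, or to being a non-primitive $2$-entry column, each of which contradicts \cref{lem:necessary-conds-empty-p-power}(i) or~(iii).

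The main hurdle will be the subcase $s = 3$ with $\sum\beta = 2$ and $\sum\gamma = 1$: here the mod-$3$ constraint $B\beta + e_{j_0} \equiv \zero \pmod 3$ must be combined with the numerical upper bound $(B\beta)_j \leq 2\cdot\sum\beta = 4$ to conclude $(B\beta)_j \in \{0,3\}$ for $j \neq j_0$ and $(B\beta)_{j_0} = 2$, at which point obstruction (B) applies to $\beta$. Once this case is handled, the analogous subcase $s = 2$ with $\sum\beta = 2$, $\sum\gamma = 0$ reduces directly to obstruction (B), and the remaining subcases follow by similar but simpler arguments, completing the verification that $\Delta'$ is empty and hence the proposition.
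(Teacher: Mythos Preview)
Your proposal is correct and, interestingly, uses a genuinely different lift than the paper. The paper takes
\[
\tilde B=\begin{pmatrix} E_k & B & B\\ \one_k & \one_\ell & 2\cdot\one_\ell\end{pmatrix},
\]
whereas you use
\[
B'=\begin{pmatrix} B & B & E_k\\ \zero & \one & \one\end{pmatrix},
\]
so your new coordinate row reads $0,1,1$ over the three blocks rather than $1,1,2$. Both constructions work; yours has the minor aesthetic advantage that $B'$ is a $\{0,1\}$-matrix whenever $B$ is, so the simplices in \cref{prop:crk-p-lower-bound} feed in directly.

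Your case analysis is also organized differently. The paper splits on the \emph{number} of nonzero $\mu_i$ (at most three) and on which block each lies in, repeatedly projecting back to~$\Delta$ or to an edge. You split on the \emph{total weight} $s=\sum\alpha+\sum\beta+\sum\gamma\in\{0,1,2,3\}$, and you distill the emptiness of~$\Delta$ into two clean combinatorial obstructions (A) and (B) on~$B$, which is neater than the paper's repeated ad hoc projections. One point you leave implicit: the subcases with $s=1$ (and the $(\sum\beta,\sum\gamma)=(1,1),(1,2)$ subcases at $s=2,3$) do not literally match (A) or (B), but your ``residual cases'' clause handles them---each forces a column $b_r$ to have at most one nonzero entry or to be of the non-primitive shape $2e_{j_0}+2e_{j_1}$, contradicting \cref{lem:necessary-conds-empty-p-power}(i) or~(iii). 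It would strengthen the write-up to state this $s=1$ obstruction explicitly (call it (C): no $\lambda\in\R^k_{\ge0}$ with $\sum\lambda\le\tfrac23$ and $\lambda+b_r/3\in\Z^k$), and note that (C) reduces to (i)/(iii) via the entry-count $2\cdot\#\{(b_r)_i=1\}+\#\{(b_r)_i=2\}\le2$.
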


\begin{proof}
Let $\Delta = H S_{k+\ell}$ be an empty $3$-power simplex with $\crk(\Delta) = \ell$ and with
\[
H =
\begin{pmatrix}
E_k & B \\
      0 & 3 \, E_\ell
\end{pmatrix} \,,
\]
for a suitable matrix $B \in \{0,1,2\}^{k \times \ell}$.
Let $\tilde \Delta = \tilde H S_{k+1+m}$ be defined by the matrix

\newcommand\coolunder[2]{\mathrlap{\smash{\underbrace{\phantom{%
    \begin{matrix} #2 \end{matrix}}}_{\mbox{$#1$}}}}#2}

\[
\tilde H =
\begin{pmatrix}
E_{k+1} & \tilde B \\
      0 & 3 \, E_m
\end{pmatrix}
= 
\begin{pNiceMatrix}[last-row]
E_k & 0 & E_k      &         B   & B \\
  0 & 1 & \one_k   & \one_\ell   & 2 \cdot \one_\ell \\
  0 & 0 & 3 \, E_k & 0           & 0 \\
  0 & 0 & 0        & 3 \, E_\ell & 0 \\
  0 & 0 & 0        & 0           & 3 \, E_\ell \\
    &   & \textrm{\vphantom{\Huge M}\footnotesize Block E} & \textrm{\footnotesize Block B1} & \textrm{\footnotesize Block B2}
\rule[-8pt]{0pt}{8pt}
\CodeAfter
\UnderBrace[yshift=2pt]{1-3}{5-3}{}
\UnderBrace[yshift=2pt]{1-4}{5-4}{}
\UnderBrace[yshift=2pt]{1-5}{5-5}{}
\end{pNiceMatrix}
\,,
\]
where $\one_r$ denotes the all-one row-vector with~$r$ entries.
The names of the three blocks of columns are intended for a more convenient description of the arguments below.
By definition, we have $\crk(\tilde \Delta) = m$, so that we are left to show that~$\tilde \Delta$ is empty.

To this end, assume for contradiction that $\tilde \Delta$ contains a lattice point besides its vertices.
Letting $\tilde h_1,\ldots,\tilde h_{k+1+m}$ be the columns of~$\tilde H$, such a lattice point $z \in \tilde\Delta$ has a representation of the form
\[
z = \sum_{j=1}^{k+1} \lambda_j e_j + \sum_{i=1}^m \mu_i \tilde h_{k+1+i} \,,
\]
where $0 \leq \lambda_j < 1$, for $j \in [k+1]$, and $\mu_i = \frac{n_i}{3}$, for suitable $n_i \in \{0,1,2\}$ and $i \in [m]$, and with $0 < \lambda_1 + \ldots + \lambda_{k+1} + \mu_1 + \ldots + \mu_m \leq 1$.
In particular, there are at most three indices $i \in [m]$ such that $\mu_i \neq 0$.

\noindent \emph{Case 1:} $\mu_i = 0$, for every $i \in [m]$.

This means that the first $k+1$ coordinates of $z = \sum_{j=1}^{k+1} \lambda_j e_j$ correspond to a non-vertex lattice point in $S_{k+1}$, which is a contradiction.

\noindent \emph{Case 2:} $\mu_r \neq 0$, for exactly one index $r \in [m]$.

If $\mu_r$ is the coefficient of a column in Block B1 or B2, then omitting the $(k+1)$st coordinate shows that~$z$ projects onto a non-vertex lattice point in $\Delta = H S_{k+\ell}$; a contradiction.

If $\mu_r$ corresponds to Block E, then $\lambda_{k+1} = 1 - \mu_r$, because $z_{k+1} \in \Z$.
Thus, $z = (1-\mu_r) e_{k+1} + \mu_r \tilde h_{k+1+r}$ as a lattice point in $\R^{k+1+k}$ lies in the relative interior of an edge of the simplex defined by the upper left square submatrix of size $k+1+k$ of~$\tilde H$.
This is a contradiction as all edges of that simplex are primitive.

\noindent \emph{Case 3:} $\mu_r,\mu_s \neq 0$, for exactly two indices $r,s \in [m]$.

Observe that $\mu_r + \mu_s \in \{\frac23,1\}$.
If $\mu_r + \mu_s = 1$, then because of the $(k+1)$st coordinate of~$z = \mu_r \tilde h_{k+1+r} + \mu_s \tilde h_{k+1+s}$, which must be integral, the indices $r,s$ correspond to columns that are either both in Block B2 or both not in Block B2.
Since $B$ has the property that each of its columns has at least two non-zero entries (see \cref{lem:necessary-conds-empty-p-power}), these entries are in $\{1,2\}$, and because of the unit matrix~$E_k$ in Block E of~$\tilde H$, this means that the indices $r,s$ correspond to columns that are either both in Block B1 or both in Block B2.
This, however, contradicts the assumed emptiness of the simplex~$\Delta$.

So, let $\mu_r + \mu_s = \frac23$, that is, $\mu_r = \mu_s = \frac13$.
It cannot be that both~$r$ and~$s$ correspond to columns in Block B2, since then $\lambda_{k+1} = \frac23$ to get~$z_{k+1}$ integral, which contradicts $\lambda_{k+1} + \mu_r + \mu_s \leq 1$.
If, say, $r$ corresponds to Block B2, and~$s$ does not, then $\lambda_{k+1} = 0$.
In the case that~$s$ corresponds to Block E, neglecting rows $k+1,\ldots,k+1+k$ of~$\tilde H$ shows that~$z$ points to a non-vertex lattice point in~$\Delta$; a contradiction.
In the case that~$s$ corresponds to Block B1, two things can happen.
First, $r$ and~$s$ select different columns in the matrix~$B$ in Blocks B1 and B2.
Here, again~$z$ points to a non-vertex lattice point in~$\Delta$.
Second, $r$ and~$s$ select the same column of~$B$.
Here, the part $\mu_r \tilde h_{k+1+r} + \mu_s \tilde h_{k+1+s}$ of~$z$ has at least two non-zero entries in $\{\frac23,\frac43\}$ in the first~$k$ rows, which cannot both be complemented to an integer by the remaining part $\lambda_1 e_1 + \ldots + \lambda_k e_k$, because $\lambda_1 + \ldots + \lambda_k \leq \frac13$.
If neither~$r$ nor~$s$ correspond to Block B2, then $\lambda_{k+1} = \frac13$ and hence $z = \frac13 e_{k+1} + \frac13 \tilde h_{k+1+r} + \frac13 \tilde h_{k+1+s}$.
Then, there is no constraint from the integrality of~$z$ in the rows $k+1,\ldots,k+1+k$, and~$z$ again points to a non-vertex lattice point in~$\Delta$.

\noindent \emph{Case 4:} $\mu_r,\mu_s,\mu_t \neq 0$, for exactly three indices $r,s,t \in [m]$.

This case forces $\mu_r = \mu_s = \mu_t = \frac13$ and thus $z = \frac13 \tilde h_{k+1+r} + \frac13 \tilde h_{k+1+s} + \frac13 \tilde h_{k+1+t}$.
Looking at the $(k+1)$st coordinate of~$z$ shows that either $r,s,t$ all correspond to a column in Block~B2 or none of them does.
If all of them belong to Block~B2, then only considering the first~$k$ and last~$\ell$ rows of~$\tilde H$ yields a non-vertex lattice point in~$\Delta$.
Similarly, if all three indices $r,s,t$ belong to Block E or Block~B1, then there is no constraint in rows $k+1,\ldots,k+1+k$ and $k+1+k+\ell+1,\ldots,k+1+k+2\ell$, and hence the remaining $k+\ell$ rows of~$\tilde H$ yield a non-vertex lattice point in~$\Delta$ once again.
\end{proof}

Iterating the construction in \cref{prop:lift-empty-3-powers} yields a sequence $\left(d(k)\right)_{k \geq 3}$ of dimensions, such that $\crk_3(d(k)) \geq d(k) - k$, and which satisfies the recurrence
\[
d(3) = 8 \quad\textrm{ and }\quad d(k) = 2 d(k-1) + 1 \,,\quad \textrm{ for } \quad k \geq 4 \,,
\]
with the initial value being due to \cref{prop:crk-3-d8}.
Solving this recurrence shows that
\[
\crk_3(2^k + 2^{k-3} - 1) \geq 2^k + 2^{k-3} - k - 1 = \crk_2(2^k + 2^{k-3} - 1) + 1 \,, \textrm{ for } k \geq 3 \,.
\]
In particular, there are infinitely many dimensions~$d$ for which $\crk_2(d) < \crk_3(d)$.

In general, one might expect such a monotonic behavior of $\crk_p(d)$ with respect to any dimension and primes~$p$.
\begin{question}
\label{qu:crk-p-monotonic}
Is the maximal cyclicity rank of $p$-power simplices non-decreasing with~$p$?
More precisely, do we have $\crk_p(d) \leq \crk_q(d)$, for every $d \in \N$ and every two primes $p \leq q$?
\end{question}

Note however, that things are less clear than at first sight:
In an empty $p$-power simplex defined by $H = \begin{pmatrix}
E_{d-r} & B \\
      0 & p \, E_r
\end{pmatrix}$, we cannot just replace $p \, E_r$ with $q \, E_r$ and obtain an empty $q$-power simplex.
For a concrete example, take the empty $3$-power simplex $\conv\left\{\zero,e_1,e_2,e_3,(2,2,2,3)^\intercal\right\}$ from \cref{rem:necessary-conditions}.
It turns out that the $5$-power simplex $\conv\left\{\zero,e_1,e_2,e_3,(2,2,2,5)^\intercal\right\}$ is not empty, since it contains $(1,1,1,2)^\intercal$ as a lattice point additionally to its vertices.


\bibliographystyle{amsplain}
\bibliography{mybib}

\end{document}